\newtheorem*{rep@theorem}{\rep@title}
\newcommand{\newreptheorem}[2]{%
\newenvironment{rep#1}[1]{%
 \def\rep@title{#2 \ref{##1}}%
 \begin{rep@theorem}}%
 {\end{rep@theorem}}}
\newtheorem{theorem}{Theorem}
\newtheorem{corollary}[theorem]{Corollary}
\newtheorem{conjecture}[theorem]{Conjecture}
\newtheorem*{proposition*}{Proposition}
\newtheorem*{remark}{Remark}
\newtheorem*{thm:bounce}{Theorem \ref{thm:bounce}}
\DeclareMathOperator{\sgn}{sgn}
\providecommand{\keywords}[1]
{
  \small	
  \textbf{\textit{Keywords---}} #1
}
\title{
		\vspace{-1in} 	
		\usefont{OT1}{bch}{b}{n}
		\normalfont \normalsize \textsc{} \\ [25pt]
		\huge   Dynamics of the no-slip Galton board
}
\date{}
\author{\normalfont \large
J. Ahmed\footnote{\scriptsize Department of Mathematics, University of Delaware, Ewing Hall, Newark, DE 19711}, 
\ T. Chumley\footnote{\scriptsize Department of Mathematics and Statistics, Mount Holyoke College, 50 College St, South Hadley, MA 01075},
\ S. Cook\footnote{\scriptsize Department of Mathematics, Tarleton State University, Box T-0470, Stephenville, TX 76401},
\ C. Cox\footnotemark[2],
\\
\large
\ H. Grant\footnotemark[3], 
\ N. Petela\footnotemark[3],
\ B. Rothrock\footnotemark[3],
\ R. Xhafaj\footnotemark[3]
}
\date{\today}
\begin{document}

\maketitle

\begin{abstract}
\begin{center}
 Abstract \end{center}
\bigskip
{\small  
The ideal Galton board and Lorentz gas billiard models have been studied numerically and analytically primarily in settings where friction and rotational velocity are neglected. We eliminate these simplifying assumptions and  study the resulting dynamics of a more general model using no-slip collisions, in which particles rotate and may exchange linear and angular momentum at collisions while adhering to certain conservation laws. 
Using numerical experiments and phase portrait analysis we show that (in contrast to specular dispersing billiards) regularity persists when a small force is introduced while (consistent with specular billiards) under a stronger force new structure including invariant regions may arise. We also show analytically that with the introduction of an external force periodicity proliferates, with new types of periodic orbits not present in the no-force case.
}
\end{abstract}

\keywords{no-slip billiards, Lyapunov stability, Galton board, periodic orbits}

\section{Introduction}
\label{intro
}

In Chapter V of his 1889 treatise {\em Natural Inheritance}  \cite{galton},  Francis Galton wrote:
\begin{quote}
    I know of scarcely anything so apt to impress the imagination as the wonderful form of cosmic order expressed by the ``Law of Frequency of Error.'' The law would have been personified by the Greeks and deified, if they has known of it. It reigns with serenity and in complete self-effacement amidst the wildest confusion. The huger the mob, and the greater the apparent anarchy, the more perfect is its sway. It is the supreme law of Unreason.
\end{quote}

In the late nineteenth century, Galton found the physical manifestation of such Gaussian error laws, later codified and demystified as central limit theorems, 
in his eponymous board (also known as a ``bean machine'' or ``quincunx'') in which balls dropping from a central location above 
are scattered by regularly spaced pegs or nails, collectively coming to rest according to a normal distribution at the bottom. 
At the beginning of the twentieth century, consequent to physicists' search for an illuminative underlying model for  thermal
and electrical conductivity, Hendrik
Lorentz \cite{Lorentz} introduced a model in which the ions in a metal
were considered as an extended array of fixed disks, between which an electron would move freely until colliding with a scattering disk, with no loss of energy. Both the Galton board (a system with external force and unbounded total energy) and the Lorentz gas (a system with no external force and fixed total energy) are examples of billiard dynamical systems, which have proven broadly applicable and continue to be widely studied this century.
The Galton board has been approached using numerical simulations \cite{kozlov, Moran} and  heuristic estimates \cite{KR}, as well as analytically, as in 
\cite{CD} which  considers an idealized Galton board bounded above and continuing infinitely downward with regularly spaced disk scatterers in which friction and the rotational velocity of the balls are ignored. All of the above classical and modern examples, with particles moving freely until colliding at boundaries  with mirror-like reflections, are examples of \emph{specular billiards}, the most widely used billiard collision model.  

\begin{figure}[htbp]
    \centering
    \includegraphics[width=\textwidth]{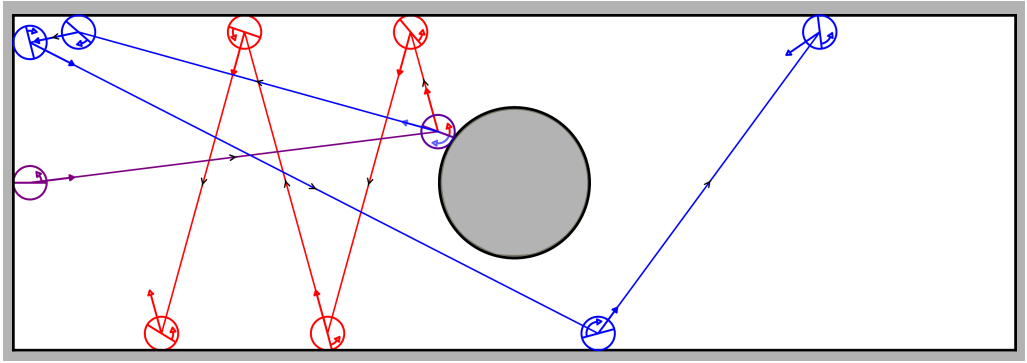}
    \caption{\small 
    Initially identical specular  and no-slip  orbits, starting middle left, in a rectangular billiard with a single scatterer. In the no-slip collisions the rotational velocity changes and the particle exits at an angle different from its entrance, yielding marked differences in the dynamics. In the specular case, the particle reflects from the scatterer nearly vertically, while in the no-slip case it heads towards the upper left corner. Internal particle arrows indicate rotational velocity, unchanged in the specular case but changing direction and magnitude after no-slip collisions.}
    \label{fig:specnoslip}
\end{figure}

Our focus, however,  is on the less well understood alternative \emph{no-slip} or \emph{nonholonomic} billiard model, 
primarily in dimension two, which allows non-dissipative friction to effect an exchange between rotational and linear velocities at collisions. Figure \ref{fig:specnoslip} shows an example of the divergence between the standard billiard model where the angles of incidence and reflection coincide (which we henceforward refer to as the \emph{specular} model) and the no-slip model for one trajectory.  This model of incorporating particle rotation was studied at least since Richard Garwin \cite{Garwin} used it to explain the motion of an ultraelestic Super Ball in 1969. Garwin's model was ostensibly three dimensional, but assumed the axis of rotation of the ball was orthogonal to the plane of motion, effectively reducing to two spacial dimensions.  
In \cite{gutkin} a planar model  is derived and the foundational dynamical fact is shown that in an infinite strip or \emph{channel} all colliding trajectories are bounded. 
A general geometric description of no-slip collisions in any spatial dimension is given in \cite{CF}, including specifically the case with unrestricted rotation in dimension three. The model is modified in 
\cite{cross} and 
\cite{hefner}  to consider dissipative cases, where energy is lost at collisions according to a tangential or normal coefficient of restitution. We will restrict our inquiry to the ideal model. 

Independently of the above derivations, which arise purely from considering conservation laws, 
in \cite{BKM} it was shown that the model arises in the limit of nonholonomic rolling on a cylinder or ellipsoid  flattened into an infinite channel or ellipse, respectively. This connection can be extended to the nonholonomic rolling limits of a broad class of manifolds \cite{CFBZ}. In light of this correspondence, it is not surprising that in certain cases small no-slip bounces on a cylinder approximate nonholonomic rolling \cite{CCCF}.

\begin{figure}[htbp]
    \centering
    \includegraphics[width=.97\textwidth]{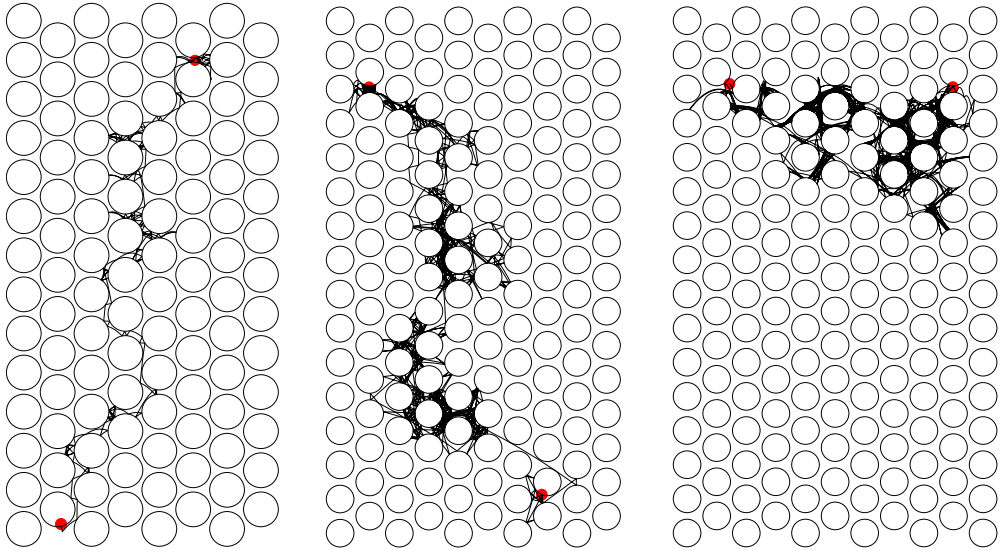}
    \caption{\small With a downward force, in the specular Galton board (left, $\gamma=0$) the particle reaches a fixed vertical position with probability 1, and the empirical distribution of its horizontal position will approach the Gaussian distribution as the number of particles goes to infinity. For a no-slip Galton board some orbits may be trapped (possibly the one on the right, $\gamma=3$) or merely slowed down (center, $\gamma=1/\sqrt{2}$).
    Dots indicate initial and final positions of the particle. }
    \label{fig:galtonboard}
\end{figure}

The rich nature of the dynamics of no-slip collisions hinted at above leads us to the present investigation,
where we study the Galton board and Lorentz gas models using rotating particles which collide with scatterers according to the no-slip collision law.
In this alternative law, collisions are influenced by rotational inertia and consequently by the mass distribution of the particles. In fact, recent work  shows that the dynamics of no-slip billiards may be altered in an essential way, for a fixed billiard table and particles of fixed radii, by merely varying the mass distribution of a radially symmetric colliding particle \cite{ACW}. More precisely, then, we consider a family of Galton boards parametrized by a mass distribution constant of the colliding particle. We assume a disk-shaped particle with radially symmetric mass distribution. 
Such systems may be identified by a parameter $\gamma=\sqrt{2\lambda}/R$ where $R$ is the particle radius and $\lambda$ is the second moment of inertia. 
Then  $\gamma=0$ corresponds to point masses, dynamically equivalent\footnote{More specifically, in the $\gamma=0$ no-slip case the rotation is exactly reversed while in the specular case the rotation is unaltered. From the standpoint of \cite{hefner}, the latter is the case with tangential coefficient of restitution 1 and moment 0, while the latter is the case of tangential coefficient of restitution -1 and any moment. 
However, the essential point is the two yield identical spatial trajectories given by specular collisions.} to the specular model, $\gamma = 1/\sqrt{2} \approx 0.707$ corresponds to uniform mass, $\gamma=1$ corresponds to ring-like particles with all the mass on the rim, and  as the moment gets large $\gamma$ approaches infinity.
In Theorems \ref{thm:bounce} and \ref{thm:periodic} we will see that periodic behavior will be tied to certain ratios of velocities aligning with the parameter $\gamma$.
Figure \ref{fig:galtonboard} shows three trajectories in a Galton board, the first reflecting with specular collisions and the other two modeled by no-slip collisions with $\gamma>0$.

\begin{figure}[htbp]
    \centering
    \includegraphics[width=.77\textwidth]{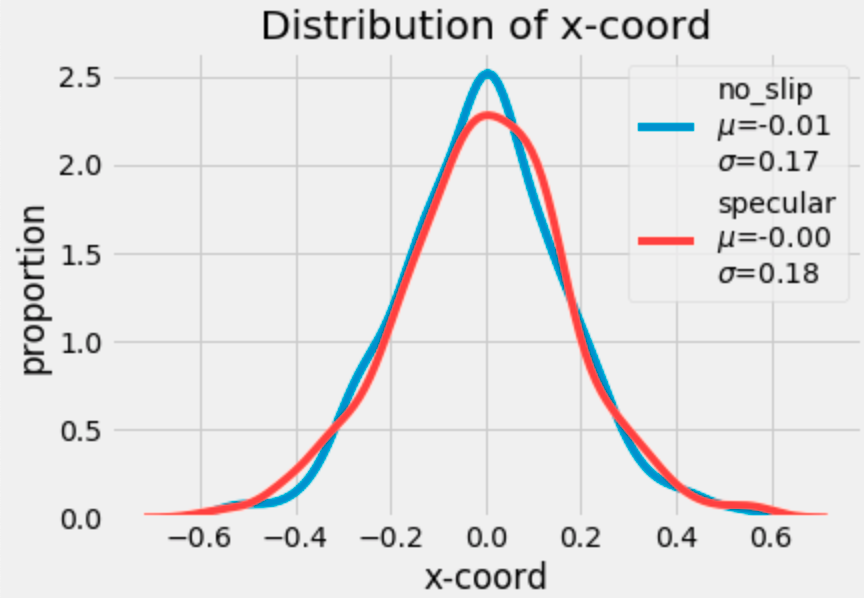}
    \caption{\small The no-slip Galton board appears to yield the same normal distribution of terminal horizontal displacements that the standard Galton board displays, however, the large number of trapped particles suggests more complicated behavior cannot be ruled out.}
    \label{fig:normal}
\end{figure}

A natural first experiment, hearkening back to Galton, is to consider the distribution of particles reaching a fixed height in a  no-slip Galton board and compare it to the classical case. Figure \ref{fig:normal} gives the distribution of the terminal horizontal position of particles
for both the  specular Galton board and the modified no-slip board with uniform mass particles. In both cases, 100,000 particles were dropped with unit speed and a random initial direction, and final horizontal displacements were recorded for all particles which reached the terminal height by a fixed time. On the surface, it appears that the behavior is unchanged for no-slip collisions, as the result gives an apparently Gaussian distribution with a similar standard deviation. However, while almost all of the particles in the specular Galton board reach the terminal height before the termination time, a significant percentage (in the given experiment, roughly forty percent) of the particles in the no-slip case have not arrived at the end of the experiment, either because their descent is slowed or because they are stuck in invariant regions of the phase space. Even when the termination time is extended to the reasonable limits of calculation this phenomenon persists.

For this reason, we postpone the apparently subtle questions of long-term statistical behavior and in this paper focus on the dynamics that arise in no-slip billiards under an external force. Towards that end we focus on basic questions which remain of interest even for specular billiards with no force, and which are largely unanswered for no-slip billiards with force. First, when are the orbits periodic? Second, if not periodic,
when are the orbits bounded? Linking the two questions is the third question of whether
a given periodic orbit is Lyapunov stable, producing a bounded corridor containing an
invariant region. Generally speaking, when no force is present periodic points and surrounding invariant regions are much more common for no-slip billiards than in standard specular billiards. See Figure \ref{fig:pentagons}.

\begin{figure}[htbp]
    \centering
    \includegraphics[width=\textwidth]{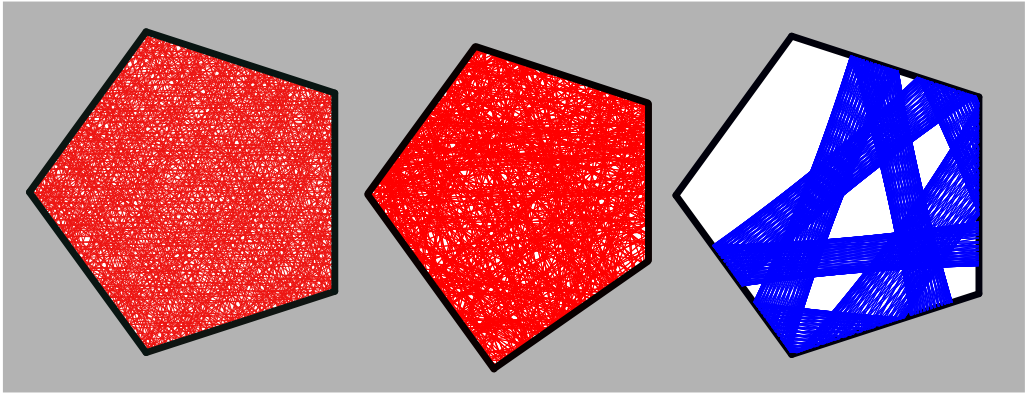}
   
    \caption{\small  A generic orbit of a specular regular pentagon billiard (left), a specular non-regular pentagon (center), and a no-slip pentagon (right) where the orbit is restricted to a corridor near a $14$-periodic point. Here there is no external force present. }
    
    \label{fig:pentagons}
\end{figure}

As the boundedness of the infinite strip was the first dynamical no-slip result understood in the no force case, we begin by considering a channel with either a parallel or orthogonal force added. The parallel case, with the force along the channel, intuitively corresponds to ball dropped down an infinite well, a case that has already been studied. For the restricted three dimensional case with orthogonal rotational axis,  
\cite{hefner} gives a closed form expression for the height of the dropped ball, noting that it is bounded (unless, of course, dropped straight down avoiding the walls altogether) and that the range as it bounces up and down depends on the moment of inertia. This boundedness was later shown to hold for a channel between two hyperplanes in any dimension, and in fact for any cylinder  \cite{CCCF}.

Turning then to the orthogonal force case, if the energy is sufficient to reach the upper wall for a given initial velocity, it is straightforward to show that the motion remains bounded by considering tangential and normal components, applying the argument for the channel with no force to the tangential motion. Suppose instead that the energy is not sufficient to reach the upper wall, or equivalently that there is a single boundary line and the table is a half plane with orthogonal force. Intuitively, this is simply a ball bouncing on the ground. We show that if the ratio of angular to horizontal velocity precisely matches the mass distribution constant $\gamma$, the system will be  2-periodic and therefore bounded. In all other cases it will exhibit a drift rendering it unbounded. See Figure \ref{fig:bouncingball}.

\begin{figure}[htbp]
    \centering
    \includegraphics[width=1\textwidth]{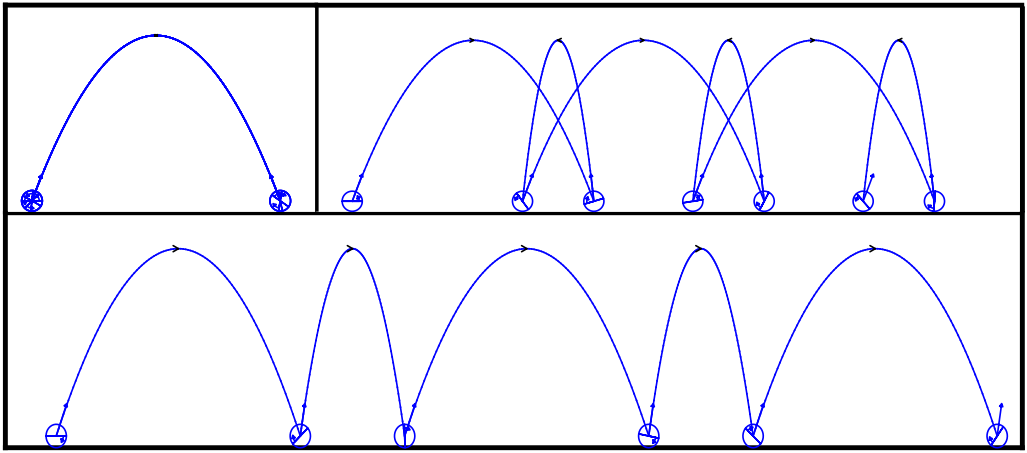}
    \caption{\small When the ratio of tangential velocity to rotational velocity equals the mass distribution constant $\gamma$, a no-slip ball bounces back and forth in a 2-periodic motion (top left). Otherwise it will drift, either in a back and forth motion with net drift (top right) or steadily (bottom).
        }
    \label{fig:bouncingball}
\end{figure}

More precisely, 
consider a no-slip billiard table consisting of the upper half plane, with particles of mass distribution $\gamma$. 
Let $\mathbf{q}_0=(x_0, y_0, z_0)$ 
and $\mathbf{p}_0=\dot{\mathbf{q}}_0$
be the initial position and velocity of an arbitrary 
trajectory, with $x_0$ the rotational position or orientation (normalized so that the energy associated with the velocity $\mathbf{p}$ is proportional to $\| \mathbf{p} \|$ under the standard Euclidean norm by letting $x_0=\gamma R \theta$, where $R$ is the particle radius and $\theta$ the rotation in radians), $y_0$ the direction parallel to the boundary, and $z_0$ the inward normal, hence upward. More generally, we let $\mathbf{q}_n$ and $\mathbf{p}_n$ be the position and velocity immediately after the $n$th collision. Without losing any generality, we require $y_0=0$ and $\dot{z}_0>0$, and we suppose there is a constant force of $\mathbf{g}=(0,0,-g)$, acting on particles during free flight between collisions. 
Define \emph{tangential-rotational ratio} as $\Delta=\dot{y}_0/\dot{x}_0$.
While in a general table this ratio would vary with each collision, in this case of the no-slip half-plane, the tangential-rotational ratio $\dot{y}_n/\dot{x}_n$ at the $n$th collision is constant in $n$.
We may classify the possible trajectories of the no-slip bouncing ball as follows. 

\begin{theorem}[No-slip bouncing in the half-plane]
For any trajectory in
the no-slip half plane,
the velocity is $2$-periodic. That is, $\mathbf{v}_i=\mathbf{v}_{i+2}$ for any integer $i$. If $\Delta=\gamma$,
then the phase space orbit is $2$-periodic, so that $(\mathbf{q}_i,\mathbf{v}_i)= (\mathbf{q}_{i+2},\mathbf{v}_{i+2})$ for any integer $i$. If $\Delta \neq \gamma$, the lateral displacement results in a constant net drift after every two collisions, and the position is unbounded.

\label{thm:bounce}
\end{theorem}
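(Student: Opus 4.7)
The plan is to write the no-slip collision as a linear map on the tangential--rotational plane $(\dot y,\dot x)$, observe it is an orthogonal involution, and then combine this with the constancy of the inter-collision flight time. The explicit form of the collision map follows from the two conservation laws that characterize a no-slip collision: preservation of kinetic energy (here $\dot y^2+\dot x^2$ by the chosen scaling) and angular momentum about the point of contact. Together with the reversal $\dot z\mapsto-\dot z$ these force the action on $(\dot y,\dot x)$ to be a reflection $M$ whose $+1$-eigendirection is the ``rolling-without-slipping'' line $\dot y/\dot x=-1/\gamma$, namely
\[M=\frac{1}{1+\gamma^2}\begin{pmatrix}1-\gamma^2 & -2\gamma\\ -2\gamma & \gamma^2-1\end{pmatrix}.\]
The crucial algebraic fact is that $M^2=I$.

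Next, since the only force acts along $\hat z$, the components $\dot y$ and $\dot x$ are conserved between collisions and the vertical motion is symmetric projectile motion. Consequently the post-collision normal speed $\dot z_n$ is the same at every bounce, and the flight time $\tau=2\dot z_0/g$ is independent of $n$. The post-collision tangential--rotational velocity satisfies $(\dot y_{n+1},\dot x_{n+1})^{\top}=M\,(\dot y_n,\dot x_n)^{\top}$; invoking $M^2=I$ then yields $\mathbf v_{i+2}=\mathbf v_i$, proving the first assertion.

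For the position, the horizontal displacement during flight $n$ is $\tau(\dot y_n,\dot x_n)$, so the total lateral change over two consecutive flights equals $\tau\,(I+M)$ applied to $(\dot y_n,\dot x_n)^{\top}$. A direct calculation gives
\[(I+M)\begin{pmatrix}\dot y_n\\ \dot x_n\end{pmatrix}=\frac{2(\dot y_n-\gamma\dot x_n)}{1+\gamma^2}\begin{pmatrix}1\\ -\gamma\end{pmatrix},\]
which vanishes precisely when $\Delta=\gamma$. In that case $\mathbf q_{i+2}=\mathbf q_i$ (the $z$-coordinate returns trivially, each collision occurring at $z=0$), giving full phase-space $2$-periodicity. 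For $\Delta\neq\gamma$ the displacement per two-collision cycle is a fixed nonzero vector, so the lateral position grows linearly in the number of cycles and is unbounded. The only step requiring genuine care is identifying the correct $M$ in these scaled coordinates, in particular confirming that the $-1$-eigenline of $M$ is $\dot y/\dot x=\gamma$ rather than $-1/\gamma$, since it is this sign reversal of the post-collision velocity along that line that forces the two consecutive flight displacements to cancel. Once $M$ is correctly pinned down, the rest is linear algebra.
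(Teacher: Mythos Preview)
Your proof is correct and follows essentially the same route as the paper: both identify the collision action on the tangential--rotational plane as an orthogonal reflection, hence an involution, and then pick out the $-1$-eigenline to obtain the periodicity condition $\Delta=\gamma$. Your explicit two-bounce displacement $\tau(I+M)\mathbf v$ is a slight variant of the paper's symmetry criterion $S\mathbf p_-=-\mathbf p_-$ (the kernel of $I+M$ is exactly the $-1$-eigenspace), with the small bonus that it makes the constant-drift claim for $\Delta\neq\gamma$ explicit, something the paper's proof leaves unsaid.
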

\begin{remark}
In Section \ref{sec:bounce} we show as a corollary that this result generalizes naturally to a billiard with a single hyperplane boundary and an orthogonal force in any dimension.
\end{remark}

For specular billiards, periodic orbits are dense in rational polygons \cite{BGKT} and can be created, for example, by constructing tables with certain arrangements of arcs of circles \cite{CZ15}. In the case of no-slip billiards, periodic orbits appear in more generic settings. For example, regardless of rationality considerations, the set of periodic and quasiperiodic points appear to comprise a full measure set in the phase space of any no-slip polygon. Evidence of this appears in the highly structured phase portraits shown in Figure \ref{fig:vppregpoly}, though a detailed discussion is left for future study.

\begin{figure}[htbp]
    \centering
    \includegraphics[width=1\textwidth]{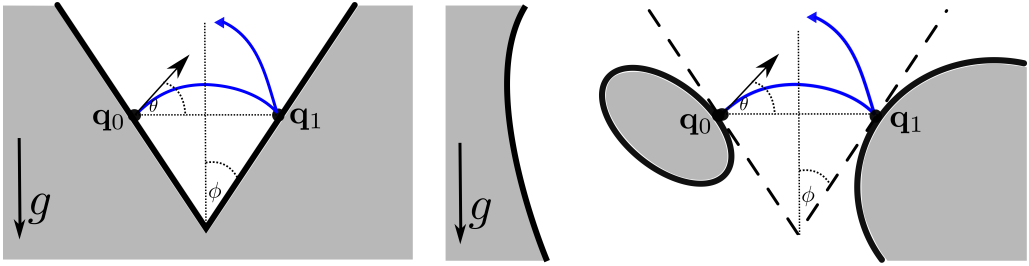}
    \caption{\small (Left) For a wedge billiard with total angle $2\phi$ and a downward force $\mathbf g$, we consider orbits with launch angle $\theta$. (Right) More generally, periodicity results extend to any billiard where $\mathbf{q}_0$ and $\mathbf{q}_1$ have a corresponding tangent wedge  accessible with no internal obstructions.
        }
    \label{fig:newwedge}
\end{figure}

Our focus now turns to periodicity in no-slip tables with force beyond the previously discussed half-plane.
We begin with a discussion of periodicity in systems we call \emph{no-slip wedges}.
Such tables are formed by the interior of two planar lines that meet an angle less than $\pi$.
We refer to angle between one of these lines and the bisector of the wedge as the \emph{wedge angle}.
We generally assume that the table is embedded in the plane so that the bisector is oriented vertically.
See Figure \ref{fig:newwedge}, left.
For any two boundary points $\mathbf{q}_0$ and $\mathbf{q}_1$ directly opposite on a no-slip wedge with angle $\phi$,
it is known, for systems without external force, that choosing the appropriate rotational velocity ensures that the orbit along a trajectory going from one boundary point to the other will be $2$-periodic \cite{CFZ}.
More precisely, if we choose coordinates such that $x$ is the rotational position of the particle, $z$ the direction of the wedge bisector, and $y$ the direction orthogonal to $z$, then 
the orbit  with initial velocity $\mathbf{q}_0=(\dot{x}_0,\dot{y}_0,\dot{z}_0)$ will have period two if and only if $\dot{z}_0=0$ and
\begin{equation}
\gamma=-\frac{\dot{y}_0}{\dot{x}_0} \sin\phi.
\label{eq:spw}
\end{equation}

The present study shows that 2-periodic orbits continue to exist  under an analogous condition when we introduce a downward force $\mathbf{g}=(0,0,-g)$, given in the newly defined coordinates adapted to the wedge.
(Recall that the first coordinate of $\mathbf g$ is the rotation, which is necessarily zero but which we include for consistency.)
In this case no horizontal periodic trajectory is possible. 
However, we can generalize Equation \eqref{eq:spw} to any upward and inward trajectory. 
That is, periodic trajectories exist for any initial state $(\mathbf{q}_0, \mathbf{p}_0)$ with $\dot{z}_0>0$ and $\sgn{y}_0=-\sgn\dot{y}_0$, for any wedge with angle $0<\phi<\pi/2$ and force parallel to bisector under a condition, to be given precisely below,
in terms the angle $\theta$, which we call the \emph{launch angle},
between the vector $\mathbf p_0$ and the vector connecting $\mathbf q_0$ to $\mathbf q_1$.
In fact, since no-slip collisions are completely determined by the particle's velocity at the point of contact, Equation \eqref{eq:spw} can be generalized to tables beyond wedges.
We call a pair of boundary points $\mathbf q_0$ and $\mathbf q_1$ \emph{accessible} if the lines tangent to the boundary at $\mathbf q_0$ and $\mathbf q_1$ form an upward (or even downward) wedge with wedge angle $0<\phi<\pi/2$, such that the wedge has no internal obstructions.
That is, the free path between $\mathbf q_0$ and $\mathbf q_1$  within the wedge must not intersect the boundary of the table at any other point. We refer to the corresponding wedge between accessible boundary points as the \emph{tangential wedge}.
See Figure  \ref{fig:newwedge}, right.
Finally, we define a 2-periodic orbit between $\mathbf q_0$ and $\mathbf q_1$ to be \emph{path-reversing} if the trajectory traced along the orbit from $\mathbf q_0$ to $\mathbf q_1$ is the same as that along the orbit from $\mathbf q_1$ and $\mathbf q_0$.
We are now ready to characterize 2-periodic orbits in the following theorem.
The proof is given in Section \ref{sec:bounce}. 

\begin{theorem}(Periodicity in no-slip billiards with force) 
Suppose that $\mathbf{q}_0$ and $\mathbf{q}_1$  are accessible boundary points on a no-slip billiard  directly opposite on their tangential wedge, with force along the bisector $\mathbf{g}=(0,0,-g)$.
Then for any direction inward relative to the 
tangential wedge with
launch angle $\theta>0$ 
(hence $\dot{z}_0=\sin\theta>0$) there is a path-reversing $2$-periodic orbit between
$\mathbf{q}_0$ and $\mathbf{q}_1$ if and only if
\begin{equation}
\label{eq:2per}
    \gamma=\frac{v}{\dot{x}_0}(\sin\theta\cos\phi-\cos\theta\sin\phi)
\end{equation}
where 
\begin{equation}
\label{eq:vel}
v=2\sqrt{\frac{gd}{\sin{2\theta}}} 
\end{equation}
with $d=\| \mathbf{q}_0-\mathbf{q}_1 \|$, the distance between boundary points.
\label{thm:periodic}
\end{theorem}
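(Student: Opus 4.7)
The plan is to analyze a single parabolic arc from $\mathbf{q}_0$ to $\mathbf{q}_1$, impose the path-reversal condition at the collision at $\mathbf{q}_1$ via the no-slip collision law, and then invoke the wedge's mirror symmetry about its bisector to close the orbit at $\mathbf{q}_0$.

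First, I would choose coordinates aligned with the tangential wedge: the bisector along the $z$-axis and the chord $\overline{\mathbf{q}_0\mathbf{q}_1}$ along the horizontal $y$-axis, with $\mathbf{q}_0$ and $\mathbf{q}_1$ symmetric about the $z$-axis. The inward unit normal and unit tangent at $\mathbf{q}_1$ are $\mathbf{n}=(-\cos\phi,\sin\phi)$ and $\mathbf{t}=(\sin\phi,\cos\phi)$; the corresponding vectors at $\mathbf{q}_0$ are obtained by reflecting across the bisector. Since gravity exerts no force on the rotational coordinate, $\dot{x}$ is constant along each parabolic arc. Standard projectile kinematics, with launch angle $\theta$ above the horizontal chord and range $d$, yields equation \eqref{eq:vel} (in the paper's normalization for $v$), and by time-reversal symmetry of ballistic motion about its apex, the pre-collision spatial velocity at $\mathbf{q}_1$ is the reflection of the launch velocity across the chord. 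Projecting onto the local frame gives $v_t = v(\sin\phi\cos\theta - \cos\phi\sin\theta) = -v\sin(\theta-\phi)$ and $v_n = -v\cos(\theta-\phi)$.

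Next, I would apply the no-slip collision law: a linear involution in the frame $(\dot{x},v_t,v_n)$ that reverses $v_n$ and acts on $(\dot{x},v_t)$ by an orthogonal transformation parametrized by the angle $2\alpha$ with $\tan\alpha=\gamma$. Path-reversing $2$-periodicity requires the post-collision spatial velocity to equal the negative of its pre-collision counterpart; the $v_n$-flip is automatic, while the $v_t$-flip condition $v_t^+ = -v_t$ reduces, via the half-angle identity $1+\cos 2\alpha = 2\cos^2\alpha$, to the single scalar relation $\gamma\dot{x}_0 = -v_t$. Substituting the $v_t$ computed above produces equation \eqref{eq:2per}, yielding both necessity and sufficiency of the condition at $\mathbf{q}_1$.

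To close the orbit at $\mathbf{q}_0$, I would verify via a short trigonometric identity using $\gamma = \tan\alpha$ that under \eqref{eq:2per} the $\mathbf{q}_1$ collision also reverses $\dot{x}$ itself; the time-reversed parabola then carries the particle back to $\mathbf{q}_0$, and by the mirror symmetry of the wedge about its bisector combined with this sign reversal of $\dot{x}$, the analogous condition at $\mathbf{q}_0$ holds automatically, giving a closed $2$-periodic path-reversing orbit. The main obstacle I anticipate is the bookkeeping of signs in the local frames at the two walls: the orientations of $\mathbf{t}$ and $\mathbf{n}$ at $\mathbf{q}_0$ versus $\mathbf{q}_1$, together with the sign convention for the rotational coordinate, must be aligned so that the single condition at $\mathbf{q}_1$ propagates correctly to $\mathbf{q}_0$. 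Once these conventions are pinned down, the rest is essentially elementary projectile kinematics combined with one application of the no-slip collision formula.
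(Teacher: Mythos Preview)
Your proposal is correct and follows essentially the same route as the paper's proof: both obtain \eqref{eq:vel} from projectile kinematics, impose the path-reversal requirement that the collision at $\mathbf{q}_1$ negate the spatial velocity (and the rotational component), and solve the resulting linear system coming from the no-slip map, then appeal to the bisector symmetry for the return collision at $\mathbf{q}_0$. The only cosmetic difference is that the paper works in global wedge coordinates, applying the conjugated map $R_{\phi}TR_{-\phi}$ and extracting \eqref{eq:2per} from the rotational-component equation (then checking the remaining two), whereas you project into the local $(\dot{x},v_t,v_n)$ frame at $\mathbf{q}_1$ and extract it from the tangential equation; the computations are equivalent, and your flagged sign bookkeeping is indeed the only delicate point.
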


\begin{figure}[htbp]
    \centering
    \includegraphics[width=\textwidth]{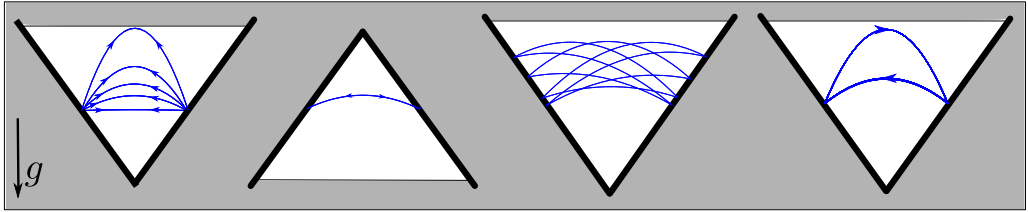}

    \caption{\small 
    Wedge billiards with force shown downward. Left: Four $2$-periodic path-reversing orbits, which may occur whenever the velocity has an upward component. Left center: These orbits exist even for wedges opening downward.  
    Right center: An isolated period 8 orbit. Right: A $2$-periodic orbit that is not path-reversing.    }
    \label{fig:wedge}
\end{figure}

Recalling that trajectories colliding with the wall in an infinitely deep no-slip vertical channel are bounded, it is natural to wonder if this extends even in the case of a wedge opening downward. The following corollary, which will be immediate from the proof of the theorem with appropriate sign changes, answers this question in the affirmative. Figure \ref{fig:wedge} (left, left center) gives experimental confirmation of the theorem and the corollary.

\begin{corollary}
Suppose the conditions of Theorem \ref{thm:periodic} hold except with upwardly oriented force $\mathbf{g}=(0,0,g)$. Then any orbit between accessible points with initial state $\dot z_0>0$ will  be path-reversing $2$-periodic if and only if Equation \eqref{eq:2per} holds.
\label{cor:opendown}
\end{corollary}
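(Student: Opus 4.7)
The plan is to obtain Corollary \ref{cor:opendown} by retracing the argument of Theorem \ref{thm:periodic} with the sign of the force reversed, relying on the fact that the no-slip collision law at a boundary point is independent of the external force field. Because collisions are instantaneous and local, all force-dependent information lives in the free-flight trajectory between $\mathbf q_0$ and $\mathbf q_1$, and the compatibility condition for path-reversing $2$-periodicity is extracted from matching velocity components at the two collision sites.

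First I will parameterize the free-flight parabola under the reversed force by writing $\mathbf q(t)=\mathbf q_0+\mathbf p_0 t+\tfrac12\mathbf g t^{2}$ with $\mathbf g=(0,0,g)$ and imposing that the trajectory meet $\mathbf q_1$. The range of a projectile from $\mathbf q_0$ to $\mathbf q_1$ depends only on the magnitude $|g|$, the speed $v$, and the launch angle $\theta$ through $v^{2}\sin(2\theta)/|g|$, so the relationship \eqref{eq:vel} between $v$, $d$, $\theta$, and $|g|$ is algebraically identical to the one obtained in Theorem \ref{thm:periodic}; only the orientation (concavity) of the parabola changes.

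Next I will compute the pre-collision velocity $\mathbf p_1^{-}$ at $\mathbf q_1$. By the reflectional symmetry of the parabola about the perpendicular bisector of $\overline{\mathbf q_0\mathbf q_1}$, the components of $\mathbf p_1^{-}$ in the wedge-adapted frame match those of $\mathbf p_0$ up to a sign in the bisector direction, exactly as in the theorem. Applying the no-slip collision map at $\mathbf q_1$, which is force-independent, and requiring the post-collision velocity to reverse the trajectory back to $\mathbf q_0$ (the path-reversing condition) then produces the same linear constraint on $\gamma$, $v$, $\dot x_0$, $\theta$, and $\phi$ that appears in \eqref{eq:2per}. Since $g$ enters the final equation only through $v$, and $v$ depends only on $|g|$, the characterizing equation is unchanged.

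The main subtlety is bookkeeping the geometric orientations: the bisector direction, the interpretation of ``inward'', the sign of $\dot z_0$, and the launch angle $\theta$ must all be read consistently with the reversed force, so that a trajectory with $\dot z_0>0$ relative to the flipped-force wedge indeed traverses the interior and reaches the opposite boundary point. This is the place in the argument where one must be careful, but it involves no new ideas beyond those already present in the proof of Theorem \ref{thm:periodic}; once the orientations are aligned correctly, the derivation goes through verbatim and yields precisely \eqref{eq:2per}, establishing the corollary.
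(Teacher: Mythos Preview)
Your proposal is correct and follows essentially the same route as the paper. The paper's own justification is a single sentence at the end of the proof of Theorem \ref{thm:periodic}: the argument goes through verbatim for the downward-opening wedge once the sign of the force is changed, and your proposal simply spells out why---the collision map is force-independent, Equation \eqref{eq:vel} depends only on $|g|$, and the orientation bookkeeping you flag is the only place care is needed.
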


The corollary may seem to defy gravity, but it does not violate any conservation laws and is perhaps less counterintuitive in light of the boundedness in the no-slip channel. This in turn might seem more natural when one recalls the link between no-slip bouncing the  known nonholonomic rolling phenomenon, occasionally manifested in golf balls rolling out of holes or basketballs rolling around the rim and out \cite{gualtieri}. 

These results on path-reversing $2$-periodic orbits already show that orbits in the no-slip case multifurcate into an infinite family when force is introduced, but the new possibilities for periodicity are even more widespread. For example, simulations suggest the following possibilities may arise.

\begin{conjecture}
For a no-slip billiard with an external force,
\begin{enumerate}[i.]
    \item   $2$-periodic non-path-reversing orbits exist between any two accessible points on an arbitrary table, and 
\item isolated higher order periodic orbits exist between accessible points on a wedge.
\end{enumerate}
\end{conjecture}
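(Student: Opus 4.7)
The plan is to treat both parts as fixed-point problems for iterates of the collision map. Let $\Phi$ denote the map sending the outgoing state at a boundary point to the outgoing state at the next collision along a parabolic gravitational arc, composed with the no-slip collision operator at arrival. Then a $2$-periodic orbit between accessible points $\mathbf{q}_0$ and $\mathbf{q}_1$ is a fixed point of $\Phi^2$ whose intermediate landing point is $\mathbf{q}_1$, and a $2n$-periodic orbit is a fixed point of $\Phi^{2n}$. Theorem \ref{thm:periodic} exhibits a one-parameter family of fixed points of $\Phi^2$ parametrized by the launch angle $\theta$; the conjecture asserts that additional fixed points exist that break the path-reversing symmetry, and that analogous existence extends to higher iterates.

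For part (i), I would work in the three-dimensional velocity space at $\mathbf{q}_0$ with rotational, tangential, and normal coordinates. Energy conservation together with the geometric constraint of landing at $\mathbf{q}_1$ cuts this down to a two-parameter family of admissible states once a travel time is chosen. The $2$-periodicity condition $\Phi^2(\mathbf{p}_0) = \mathbf{p}_0$ yields, modulo energy, two equations whose path-reversing solutions form the curve described by Equation \eqref{eq:2per}. The key step is to linearize $\Phi^2$ along this curve and exhibit a nontrivial kernel transverse to it; any such transverse zero would, by the implicit function theorem, bifurcate into a non-path-reversing branch. An alternative route uses the involution that exchanges $\mathbf{q}_0$ with $\mathbf{q}_1$ and reverses time: path-reversing orbits are fixed by this involution, while non-path-reversing orbits come in symmetric pairs, opening the door to a degree-theoretic count or a shooting argument transverse to the symmetric axis.

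For part (ii), higher-order periodic orbits in a wedge would be sought as fixed points of $\Phi^{2n}$ not arising as iterates of shorter-period solutions. The most tractable candidates are those respecting the reflection symmetry of the wedge, bouncing in a prescribed alternating pattern between the two walls; this symmetry reduces the $2n$ closure equations to roughly $n$ independent conditions. A natural strategy is continuation: start in a limiting regime where the orbit structure is tractable (for instance $g \to 0$, where the no-force results of \cite{CFZ} furnish arbitrarily long periodic paths in the wedge), verify a non-degeneracy condition at the known orbit, and invoke the implicit function theorem to continue in $g$. \emph{Isolation} would then follow from the non-degeneracy itself, since $D\Phi^{2n} - I$ would be invertible along the continued family.

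The principal obstacle in both parts is the transcendental character of $\Phi$ under gravity: inter-collision times are determined by quadratics in position but compose with the no-slip collision operator in a way that resists tidy algebraic simplification, so even $D\Phi^2$ at a path-reversing orbit is a nontrivial object whose eigenstructure likely must be analyzed asymptotically (small $g$, or specific regimes of $\gamma$ and $\theta$). Ruling out coincidence of the new fixed points with the path-reversing family, and verifying non-degeneracy for part (ii), will demand either explicit computations in a carefully chosen reduced coordinate system or a symmetry-reduced topological argument. Numerical evidence such as Figure \ref{fig:wedge} strongly supports both claims, but a rigorous proof appears to require combining symmetry reduction, implicit-function-theorem continuation from limiting regimes, and possibly fixed-point index techniques on a globally defined reduced phase space.
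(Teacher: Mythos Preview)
The statement you are addressing is labeled a \emph{Conjecture} in the paper, and the paper offers no proof of it. The only support given is numerical: Figure~\ref{fig:wedge} (right center, right) exhibits an apparent period-$8$ orbit and an apparent non-path-reversing $2$-periodic orbit found by simulation. So there is no proof in the paper to compare your proposal against; your outline is a research plan for a result the authors explicitly leave open.

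That said, there is a genuine obstruction to your continuation strategy for part~(ii). You propose to start at $g=0$ and continue higher-period wedge orbits into $g>0$ via the implicit function theorem. But the paper recalls (citing \cite{CFZ,ACW}) that in a no-force no-slip wedge only two possibilities occur: either the wedge angle is generic and the \emph{only} periodic orbits are the simple $2$-periodic ones, or the angle lies in a special dense set and the wedge is \emph{persistently periodic}, meaning every orbit is periodic of the same period. In the first case there is no higher-period orbit at $g=0$ to continue from. In the second case the higher-period orbits form a full-measure family, so $D\Phi^{2n}-I$ is everywhere degenerate and the implicit function theorem does not apply. Either way the $g\to 0$ limit gives you no isolated nondegenerate seed. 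Any rigorous argument for part~(ii) would therefore have to work genuinely away from $g=0$, which is consistent with the paper's remark that isolated higher-order orbits have no analog when no force is present.

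Your framework for part~(i) is more plausible, but note that a transverse kernel of $D\Phi^2 - I$ along the path-reversing family would indicate a \emph{bifurcation}, not automatically a new branch of fixed points; you would still need to check a higher-order transversality condition (or run a Lyapunov--Schmidt reduction) to conclude that a non-path-reversing branch actually emerges rather than the family simply becoming degenerate. As you yourself note in your final paragraph, what you have is an outline of possible attacks rather than a proof.
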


Numerical evidence for the conjecture is shown in Figure \ref{fig:wedge} (right center, right).
Note that neither of these types of examples have analogs when no force is present. Regarding the first, since $2$-periodic orbits appear only if Equation \eqref{eq:spw}  holds, they must necessarily be path reversing. Regarding the second, note that for a wedge of generic angle 
the simple $2$-periodic orbits  are the \emph{only} periodic orbits. For a certain dense set of angles the wedge will be \emph{persistently periodic}, having a phase space consisting entirely of periodic orbits, all of the same period.
As noted in \cite{ACW} and implicit in the proof of periodicity in \cite{CFZ},  
these two extremes are the only possibilities for no-slip billiards with no force present. Hence, no higher order periodic orbit will exist in isolation, unless a force is introduced.

The final question we consider is whether these periodic points may be  stable, creating invariant regions in phase space complicating analysis of the no-slip Galton board. Even for the case without force, an analytic demonstration of Lyapunov stability (informally, all trajectories starting in a neighborhood remain nearby) for no-slip billiards has been given only in the case of the wedge \cite{CFZ}. For scatterers with curvature, as in the cases of principle interest, only linear stability has been established. For now, then, we limit ourselves to investigating this question numerically for no-slip billiards with force.

\begin{figure}[htbp]
    \centering
    \includegraphics[width=\textwidth]{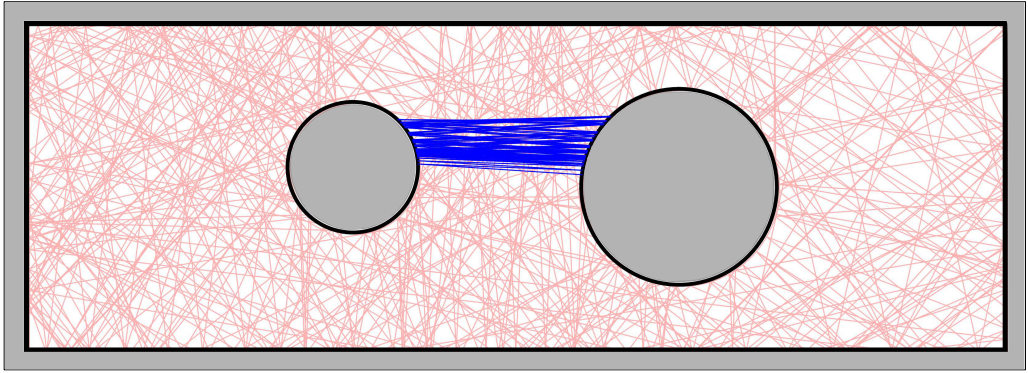}
 \caption{\small Two trajectories with the same initial conditions with no force present. The lighter trajectory in the background collides with the two disks and the outer boundaries with specular collisions, scattering. In contrast, the darker central trajectory collides with no-slip collisions and remains bounded between the disks.
        }
    \label{fig:noslipstablesinai}
\end{figure}

Specular Sinai billiards, which can be tiled across the plane to form the Lorentz gas model, are well known to be ergodic \cite{sinai}, and therefore positive measure invariant regions will not be present. No-slip Sinai billiards, however, are not ergodic and will always have (possibly very small) invariant regions in their phases spaces \cite{CFZ}, as illustrated in Figure \ref{fig:noslipstablesinai}. When even a small force is added in the standard case, the billiard map no longer preserves the standard billiard measure and therefore any discussion of ergodicity requires an alternative measure. Nonetheless, it was shown first numerically  \cite{Moran} and later demonstrated analytically \cite{Chernov2, DM}  that invariant regions may arise when a sufficient force is added. Given these two facts, it would seem likely that invariant regions might also exist in dispersing no-slip billiards with force, as we now conjecture.

\begin{conjecture}
Stable periodic orbits exist between circular scatterers for no-slip billiards with force, creating nontrivial invariant regions.  
\label{conj:inv}
\end{conjecture}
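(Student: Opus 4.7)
The plan is to locate candidate stable periodic orbits using Theorem \ref{thm:periodic} and then analyze their stability via the linearization of the return map, followed by a KAM-type argument for nonlinear stability. First I would set up a configuration of two circular scatterers of radius $R$ with centers placed so that a pair of accessible boundary points $\mathbf{q}_0$ and $\mathbf{q}_1$ exists, one on each scatterer, directly opposite on their tangential wedge. For a downward force of magnitude $g$, Equations \eqref{eq:2per} and \eqref{eq:vel} then determine, for each launch angle $\theta$, the unique rotational velocity $\dot x_0$ yielding a path-reversing $2$-periodic orbit. This produces a one-parameter family of candidate periodic orbits parametrized by $\theta$ (or equivalently by the speed $v$), from which one can select orbits whose tangential wedge lies well inside the accessible region so that small perturbations remain in the domain of definition of the return map.

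Next I would compute the derivative (monodromy matrix) $DP$ of the second-return map $P$ at the periodic point. The map $P$ is the composition of two no-slip collisions, each preceded by the gravitational free-flight flow. The linearization of the free-flight-with-force piece is obtained by differentiating the parabolic trajectory with respect to initial position and velocity, and the linearization of a no-slip collision at a circular scatterer has a known explicit form (a $\gamma$-dependent orthogonal-like transformation in the appropriate momentum coordinates, composed with the Jacobian that accounts for the local curvature $1/R$). Multiplying these together, the goal is to show that for an open set of parameters $(\theta, g, R, \gamma, d)$ the two nontrivial eigenvalues of $DP$ are complex conjugates of unit modulus, i.e.\ $|\operatorname{tr} DP| < 2$ in the symplectic $2\times 2$ block after restricting to the energy-like foliation. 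This gives linear (elliptic) stability. I would expect, by continuity from the $g=0$ limit (where linear stability is already known from \cite{CFZ}), that this condition is inherited on an open set of small enough $g$.

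To upgrade linear stability to nonlinear Lyapunov stability, and thereby to positive-measure invariant regions, I would apply Moser's twist theorem to the return map restricted to a codimension-one invariant submanifold (preserved by the no-slip dynamics since the collisions are non-dissipative and the free flight is Hamiltonian). This requires verifying three ingredients: (a) a smooth invariant measure or symplectic form preserved by $P$, which follows from the Hamiltonian nature of free flight and the energy-preserving, measure-preserving character of no-slip collisions; (b) non-resonance of the rotation number at the periodic point up to order four; and (c) a Birkhoff twist coefficient that is nonzero. Once these are in place, invariant KAM tori surround the periodic point, confining nearby orbits and producing the claimed nontrivial invariant regions.

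The main obstacle is ingredient (c): computing the Birkhoff normal form coefficient and proving it is nonzero on an open set of parameters. Even in the force-free setting, this calculation has resisted analytic treatment for curved scatterers, which is precisely why the paper remarks that only linear stability is currently known there. A reasonable intermediate goal is therefore to establish linear stability rigorously as outlined above, and to support the full Lyapunov-stability conjecture with numerical computation of $DP$ and the twist coefficient along the one-parameter family of $2$-periodic orbits, varying $\gamma$, $\theta$, and the scatterer geometry to exhibit an open region where the KAM hypotheses are numerically verified.
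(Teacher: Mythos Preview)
The statement you are attempting to prove is labeled a \emph{conjecture} in the paper, and the paper does not prove it. Rather, the authors offer only numerical evidence: the velocity phase portraits in Figure~\ref{fig:VPPforce} and the stability experiments in Figures~\ref{fig:2perunstable} and~\ref{fig:stabexp}. They explicitly write that ``an analytic demonstration of Lyapunov stability in the presence of circular scatterers is as yet unknown'' even in the force-free case, and that ``the results in this section are in the form of conjectures based on numerical experiments.'' So there is no paper proof to compare against.

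Your proposal is a coherent research program, not a proof, and you correctly identify its central gap: step~(c), the nonvanishing of the Birkhoff twist coefficient. This is open already at $g=0$ for curved scatterers, and nothing in your outline closes it; without it, Moser's twist theorem does not apply and no positive-measure invariant region follows. The linear-stability portion is more plausible---continuity from the $g=0$ case, where ellipticity is governed by inequality~\eqref{eq:ellipticity}, should indeed give elliptic $2$-periodic points for small $g$---but linear ellipticity alone does not produce the claimed nontrivial invariant regions. One further technical point deserves care: with an external force one can no longer normalize to unit speed, so the reduced phase space $S^1\times S^2_+$ of Section~\ref{sec:back} is not directly available; you would need to work on a level set of total (kinetic plus potential) energy and verify that the second-return map is genuinely symplectic there before any KAM argument can be invoked.
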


\begin{figure}[htbp]
    
    \centering
    \includegraphics[width=\textwidth]{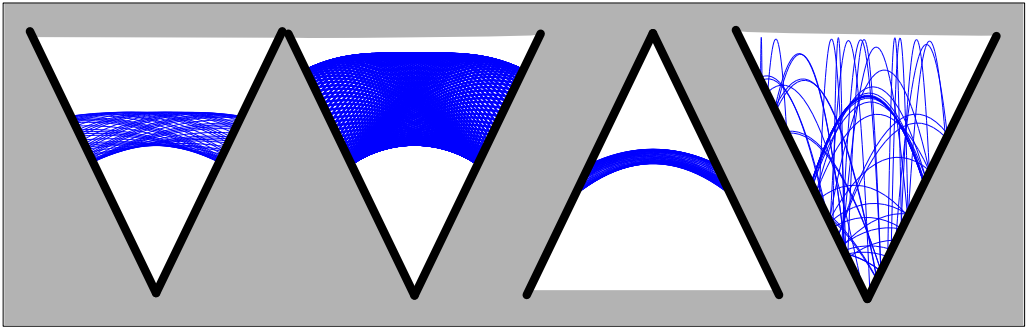}  
    \caption{\small 
For relatively small launch angles, path-reversing $2$-periodic points are surrounded by an invariant  region (left and left center), even in the case of downward opening wedges (right center). However, for large launch angles the orbits may be unstable (right).
     }
    \label{fig:2perunstablea}
\end{figure}

Looking at trajectories slightly perturbed from $2$-periodic conditions suggests that even with a force invariant regions abound, as in Figure \ref{fig:2perunstablea}.

In Section \ref{sec:inv} we pursue two further avenues of inquiry. First, we look for the limits of stability.
For specular billiards, linear stability\footnote{While linearly elliptic periodic points are generally Lyapunov stable for  specular billiards, the two are not equivalent, as pointed out in \cite{KP}. } of a $2$-periodic point is completely determined by the distance between the collisions and the curvature at collision points. This picture extends to the no-slip case \cite{W}, with the mass distribution constant $\gamma$ adding a third factor if allowed to vary. 
In the case with force, the situation becomes more complex. For example, even in the wedge where there is no curvature, fixing $\gamma$, and fixing two opposite boundary points, an orbit can pass from stable to unstable as the launch angle increases. (See Figure \ref{fig:2perunstablea}, right.) 
The second area of inquiry in Section \ref{sec:inv} is to look at a broader picture by creating phase portraits, which will allow us to gain information about the stability of a large sampling of orbits instead of a single orbit. First, however, we give a brief introduction to no-slip billiards and their phase portraits.

\section{No-slip billiards and their phase spaces}
\label{sec:back}

In this section we give the details of the no-slip collision map and discuss phase portraits for no-slip billiards, including reviewing options for dealing with the dimensionality. 
We begin with a rapid overview of the derivation of the no-slip model. A detailed development can be found in \cite{CF}.

For any two bodies given by measurable sets in $\mathbb{R}^n$, assigning starting locations and orientations for reference, we may consider their images under all rigid translations and rotations, given by the special Euclidean group $SE(n)$. Excluding any configurations resulting in overlap (and certain restricted collisions) we have a configuration manifold $M \subset SE(n) \times SE(n)$ with boundary points corresponding to points of collision. Allowing the bodies to move freely between collisions, the system is then determined completely by the choice of a linear map $$\mathcal{C}: T_{\mathbf q} M^- \longrightarrow T_{\mathbf q} M^+$$ from the outward half tangent space to the inward for each $\mathbf q \in \partial M $. Under the natural requirements of conservation of total energy and momentum (allowing an exchange between linear and angular momentum), time reversibility, and the requirement that the collisions are rigid body collisions with the force applied at the point of contact, no-slip billiards naturally arise. For the two dimensional case, there are only two maps satisfying these requirements, namely the standard specular collision and the no-slip collision. 
However, for larger $n$ more options arise, loosely speaking based on the choices for \emph{rough} and \emph{smooth} directions. When we reference higher dimensional examples we will assume the map is the unique completely rough case.

While this collision model holds very generally, we now make several simplifying assumptions. Taking the large mass limit, one the bodies (henceforward the \emph{table}) may be viewed as fixed, while the finite mass body (the particle) will be assumed to be a sphere according to the dimension of the system, with rotationally symmetric mass distribution given by $\gamma$. We will assume a particle radius that is negligibly small but non-zero. In some (especially non-convex) tables this choice becomes more important, and indeed recent work has shown examples where the 
distinction between the ``mathematical'' point mass systems and the ``physical'' billiards with positive radius are significant \cite{AB, B}, creating regular or chaotic systems according to the case. However, for the examples we consider the distinction is not consequential.

Under these simplifying assumptions, then, if $\mathbf{p}^- \in T_{\mathbf q} M^-$ is expressed in the frame $(e_1,e_2,e_3)$ corresponding to the rotational, tangent, and outward normal directions, the two dimensional no-slip map  is given explicitly by $$\mathbf{p}^+=\mathcal{C}(\mathbf{p}^-)=T \mathbf{p}^-,$$ where
 $T$ is given by
\begin{equation*}\label{collision_map}\begingroup
\renewcommand*{\arraystretch}{1.5}
T= \left(\begin{array}{ccr}-\frac{1-\gamma^2}{1+\gamma^2} & -\frac{2\gamma}{1+\gamma^2} & 0 \\-\frac{2\gamma}{1+\gamma^2} & \ \, \, \frac{1-\gamma^2}{1+\gamma^2} & 0 \\0 & 0 & -1\end{array}\right),
\endgroup \end{equation*}
with $\gamma$ 
the previously defined mass distribution constant, and $T$ acts on the incoming velocity $\mathbf{p}^-$ through multiplication of the corresponding column vector.  It is convenient to express the transformation in terms of $\beta$ where $\gamma=\tan(\beta/2),$ in which case we have
\begin{equation}\label{eq:collision_map_beta}\begingroup
\renewcommand*{\arraystretch}{1.5}
T= \left(\begin{array}{ccr}-\cos\beta & -\sin\beta & 0 \\-\sin\beta &\ \, \cos\beta & 0 \\0 & 0 & -1\end{array}\right)
=\left(\begin{array}{cr} R(\beta) & 0 \\ 0 & -1\end{array}\right),
\endgroup \end{equation} 
where $R(\beta) \in O(2)$ is a $2\times 2$ rotation matrix, through which rotational and tangential components of the velocity may interact. 

\begin{figure}[htbp]
    \centering
    \includegraphics[width=.315\textwidth]{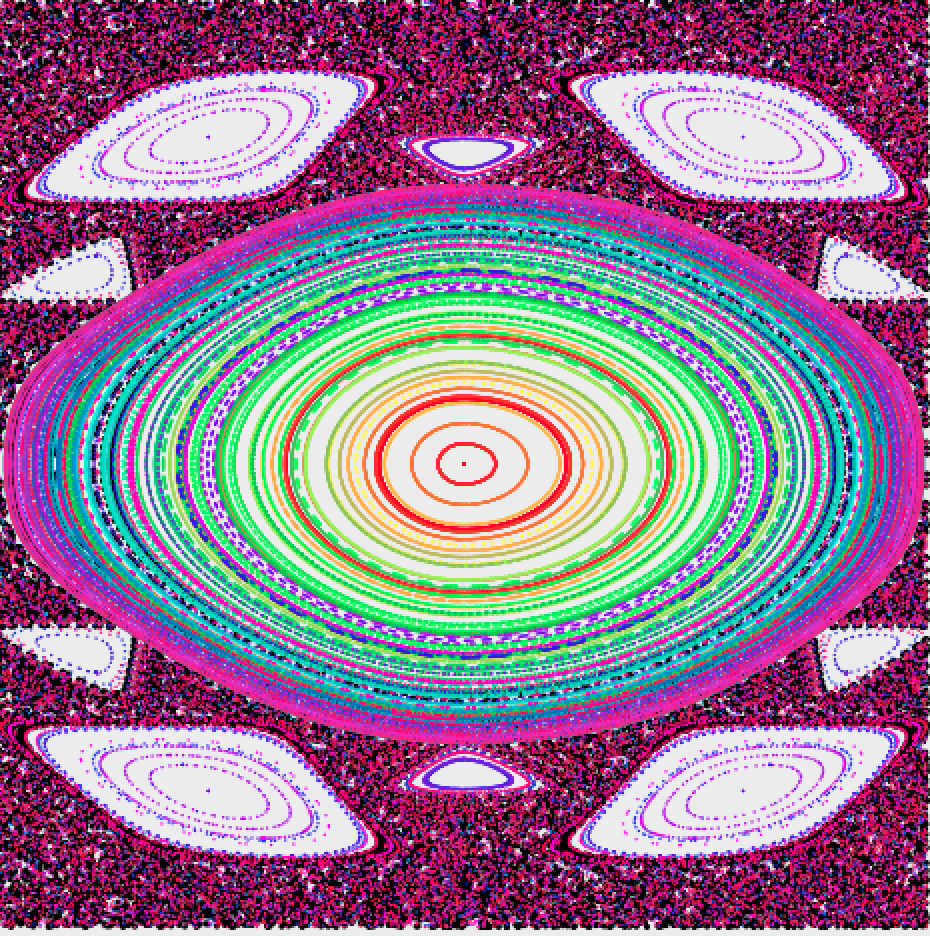}
    \includegraphics[width=.33\textwidth]{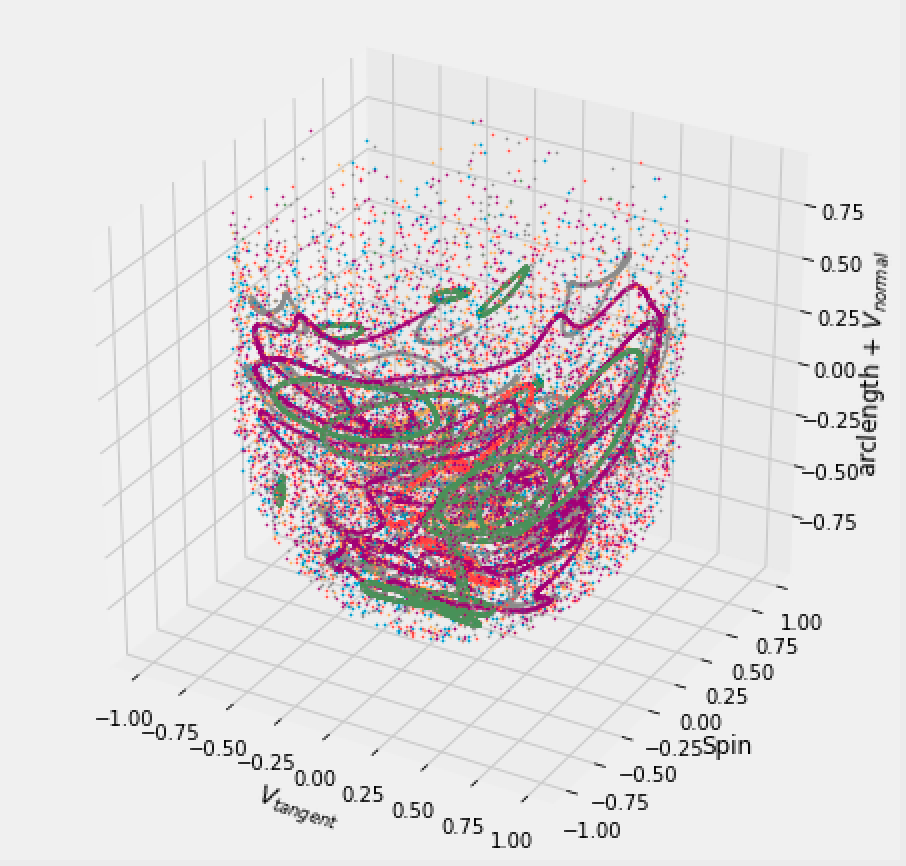}
    \includegraphics[width=.315\textwidth]{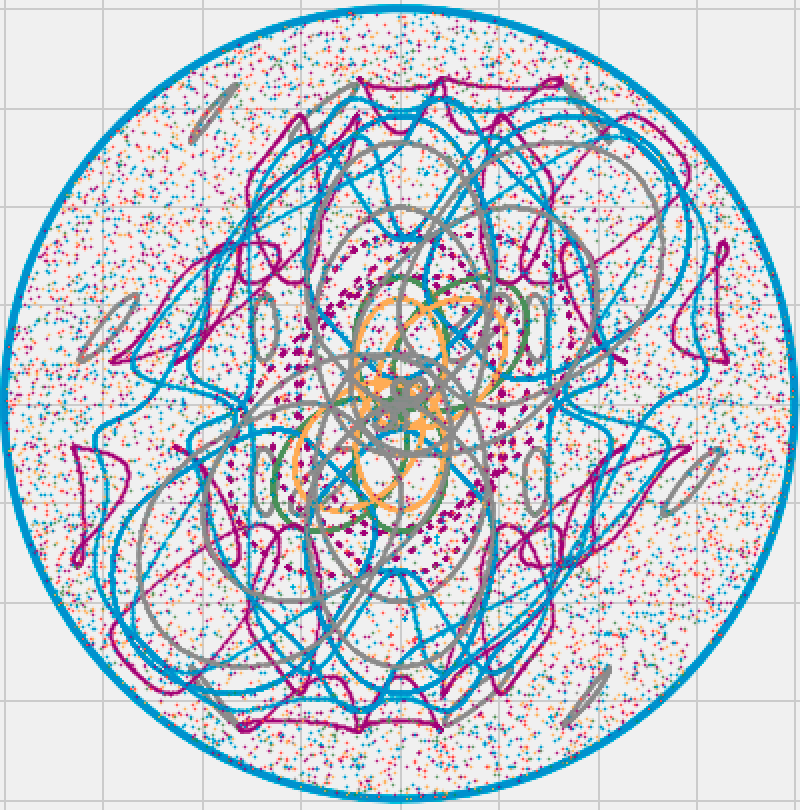}
   
\caption{\small The phase portrait of a specular billiard (left) is two dimensional, but the reduced no-slip phase portrait (center) incorporates angular velocity in addition to arclength and linear velocity. 
Often the velocity phase portrait projection (right)  is sufficient to show structural details.        }
    \label{fig:phaseportraits}
\end{figure}

Phase portraits are a valuable tool in qualitative analysis of specular billiards and may be extended to no-slip billiards. Here our focus is restricted to no-slip billiards in two spacial dimensions. The configuration space is then three dimensional due to the added rotational dimension. 
Recall that a priori, specular billiard systems in the plane are four dimensional with a state completely determined by a point 
in $\mathbb{R}^2 \times \mathbb{R}^2$.
From the perspective of the dynamics, there are two standard reductions that may be made without losing any essential information. 
First, one looks at the billiard map instead of the billiard flow by ignoring the paths between collisions and only record the position $s_n$ of the $n$th collision on the parametrized boundary $S^1$.  Second, one may require unit velocity, in which case the exiting velocity at a collision may be given by the angle $\phi_n$ relative to the inward normal. Note that the set of exit angles may be viewed as a half circle, that is, $S^1_+$.
The resulting phase portrait is two dimensional, as in Figure \ref{fig:phaseportraits} (left).

In the case of no-slip billiards, the unreduced phase space is six dimensional, which may be first reduced as in the specular case: 
 we keep track of the collisions $s_n \in S^1$ on the  parametrized boundary and we assume unit velocity. Note that the velocity vectors have a rotational velocity component, so the set of possible unit exit velocities corresponds to $S^2_+$. It turns out we can make one additional reduction, as the rotational position--though not the rotational velocity--is unimportant and may be ignored. Thus the \emph{reduced phase space} can be identified with $S^1 \times S^2_+$, a solid torus. A direct way to visualize the reduced phase space is to consider a solid cylinder with the ends identified, but a projection that sometimes simplifies the picture while retaining much of the essential information is the \emph{velocity phase portrait}, in which the position of collision is projected out and the velocities are all projected onto a disk.
Figure \ref{fig:phaseportraits} shows a  specular phase portrait for a lemon billiard, the three dimensional reduced phase portrait for a no-slip Sinai billiard comprised of a single scatterer on a torus, and the velocity phase portrait of the same no-slip billiard. One might also project onto the specular phase portrait, as in \cite{ACW}, flattening the cylinder, but we will consider only the velocity projection.

If one is familiar with  specular phase portraits, it is important to keep in mind that now closed invariant loops may overlap 
due to the projection. Nonetheless, a clear picture often emerges, showing for example an ergodic sea or in contrast a finely detailed structure, as in the polygons in  Figure  \ref{fig:vppregpoly}. In Section \ref{sec:inv} the velocity phase portraits of billiards with force will be useful in viewing the dissipation of the original structure and emergence of new invariant regions as the magnitude of the external force is increased.

\begin{figure}[htbp]
    \centering
    \includegraphics[width=\textwidth]{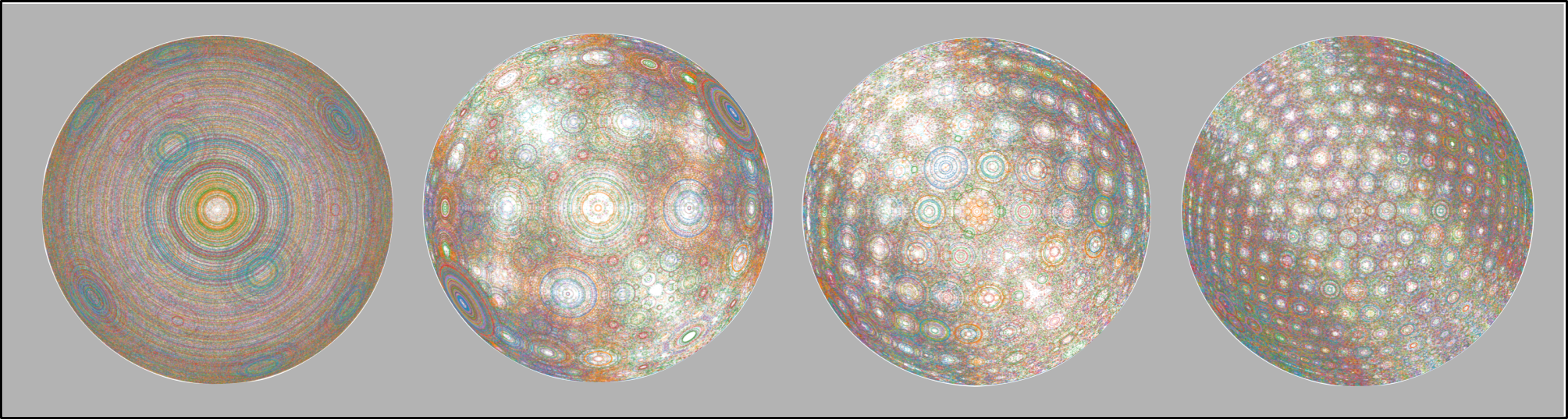}
   
    \caption{\small The velocity phase projections of regular  polygons with uniform mass distribution no-slip collisions, for a rectangular billiard (left), a hexagon (left center), a dodecagon (right center), and a icosagon (right). Distinct colors correspond to distinct orbits.
        }
    \label{fig:vppregpoly}
\end{figure}

\section{Stability of periodic points and existence of invariant regions for no-slip billiards with force}
\label{sec:inv}

One of the primary complications of incorporating no-slip collisions into the Lorentz gas model, even when no force is present, is that invariant regions are no longer precluded and the ergodicity that opens the door to statistical study in the specular case is not guaranteed. Using velocity phase portraits in Figure \ref{fig:lorentzgamma},  evidence of invariant regions appears as $\gamma$ increases. 
There are no visible signs of regularity for small $\gamma>0$, except for very large radius resulting in a nearly inscribed scatterer. However, it is known analytically that for any arbitrarily small $\gamma>0$ and any arbitrarily small radius of the scatterers there will exist correspondingly small invariant regions of phase space, located where the exit angle is very nearly tangent.  
\begin{figure}[htbp]
    
    \centering
    \includegraphics[width=\textwidth]{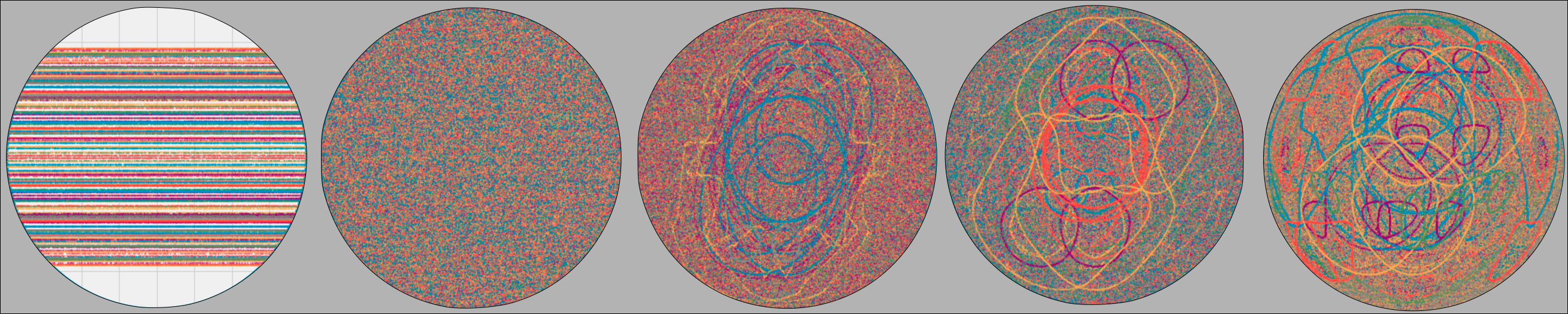}
   
    \caption{\small 
  Velocity phase portrait projections for a no-slip Sinai billiard 
  for $\gamma=0, 0.25, 0.5, 0.75,$ and $1$. The $\gamma=0$ case is dynamically equivalent to a specular billiard, and the regularity is a consequence of the constant rotational velocities. No signs of invariant regions (indicated by closed loops) appear until $\gamma>0.3$. 
     }
    \label{fig:lorentzgamma}
\end{figure}

Turning to questions of the stability of periodic points in no-slip billiards with an external force, we are ostensibly interested in Lyapunov stability, under which an orbit sufficiently close to a periodic point will remain close. However, even for no-slip billiards without force, an analytic demonstration of Lyapunov stability in the presence of circular scatterers is as yet unknown.  Accordingly, the results in this section  are in the form of conjectures based on numerical experiments, rendering more precise distinctions premature at this point.

The study of stability in scattering billiard systems with and without external forces and with specular or no-slip collisions can be divided into four regimes. Much more is known about the first three (systems with no force and either collision law, and specular systems with force), as we outline below, and give a preliminary study of the fourth regime (no-slip systems with force).

\begin{figure}[htbp]
    
    \centering
    \includegraphics[width=.85\textwidth]{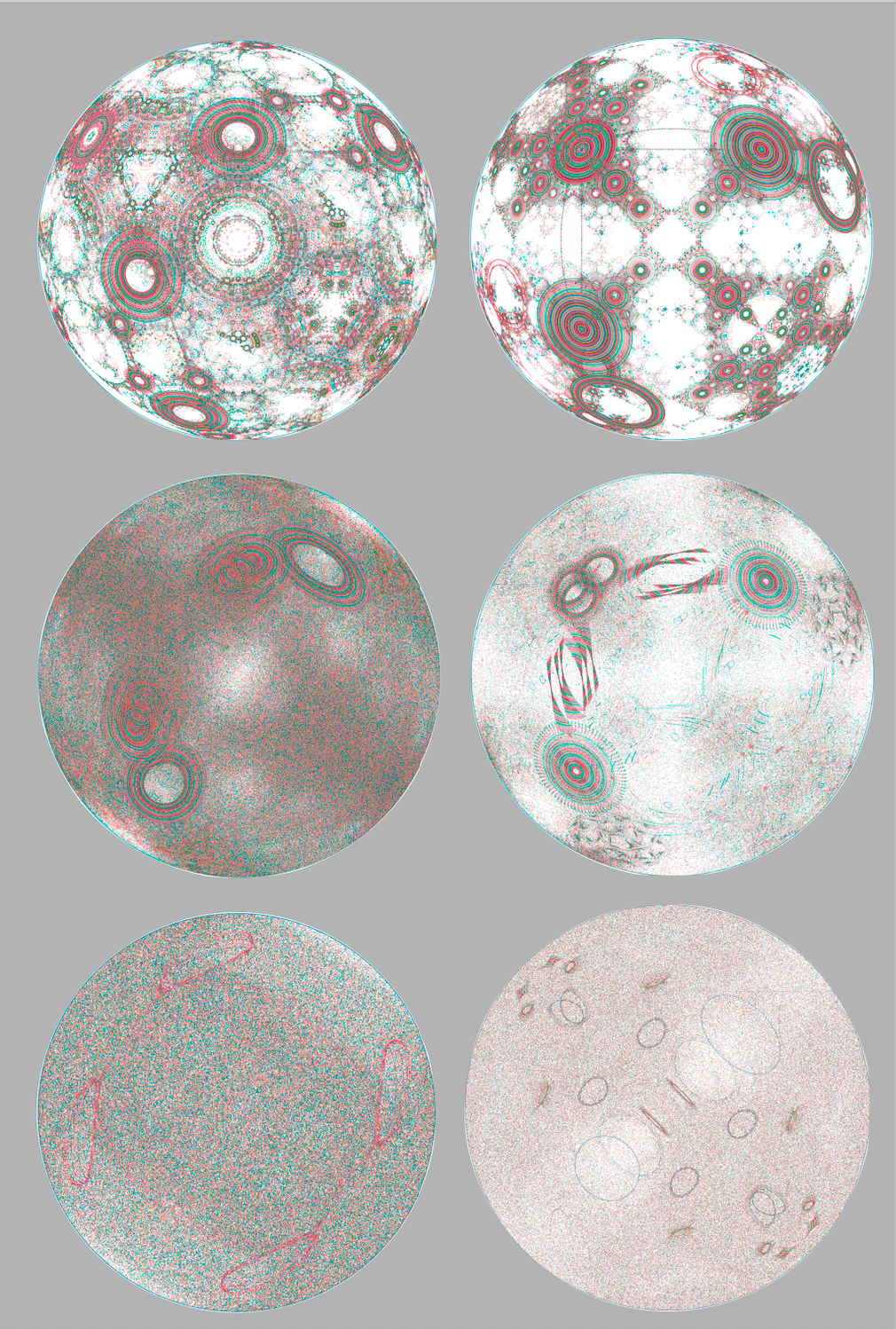}
    \caption{\small 
Velocity phase portraits for billiards with a disk-shaped scatterer, with  uniform mass $\gamma
=1/\sqrt{2}$
in the left column and  $\gamma=1$ in the right column. Even with no force (top row) an elaborate structure including elliptic islands is in evidence. This persists when a force is added (second row) though the structure dissipates as the force increases. With a stronger force (third row)  new invariant regions arise, analogous to those known to arise in the specular billiard case.
     }
    \label{fig:VPPforce}
\end{figure}

\begin{itemize}
\item \textit{Specular collisions with no force:} Much is known about the Sinai billiard, which can be tiled across the plane  to form the Lorentz gas model. In particular, its billiard map is known to be ergodic \cite{sinai}, ensuring no positive measure invariant regions.

\item \textit{No-slip collisions with no force:} There may be linearly stable periodic orbits between scatterers \cite{CFZ}, with a sharp threshold according to the curvatures of and distance between scattering disks \cite{W}. While it is not clear that linear stability implies Lyapunov stability, there is considerable numerical evidence of invariant regions forming elliptic islands, as seen in Figure \ref{fig:lorentzgamma}.

\item \textit{Specular collisions with force:} In \cite{Moran} 
    numerical  evidence is presented showing that invariant regions around near-periodic orbits may exist for specular collisions with force. In particular, see Figures 4 and 5 there. It was subsequently shown  
    that the attractor of the nonequilibrium Lorentz gas covers the whole of the accessible phase space for a small force, but at a certain threshold invariant pockets appeared \cite{dettmann}. 
    In \cite{Chernov2} such systems are studied analytically, and the evolution of trajectories  is described with a Sinai-Ruelle-Bowen measure, as  the requisite measure preservation for questions of ergodicity fails for the standard billiard measure once a force is incorporated.
\end{itemize}

It is therefore natural to fill in the fourth case of no-slip billiards with force by conjecturing that invariant regions will arise, as proposed in Conjecture \ref{conj:inv}, either due the persistence of elliptic periodic points in the no-slip regime or due to a process parallel to the one by which the attractor ceases to cover the whole of accessible phase space for the specular Lorentz gas. Looking at the velocity phase portraits in Figure \ref{fig:VPPforce}, it appears that indeed both possibilities materialize in the case of a scatterer in a hexagonal cell for the cases of uniform density ($\gamma 
=1/\sqrt{2}
$) and hollow ($\gamma=1$) colliding particles. Specifically, it appears that the ubiquitous structure present in the no-force case persists after a small force is introduced and then degrades as the force becomes larger, only to have a new structure appear, analogous  to the known behavior of  specular billiards.

\begin{figure}[htbp]
    
    \centering
    \includegraphics[width=.99\textwidth]{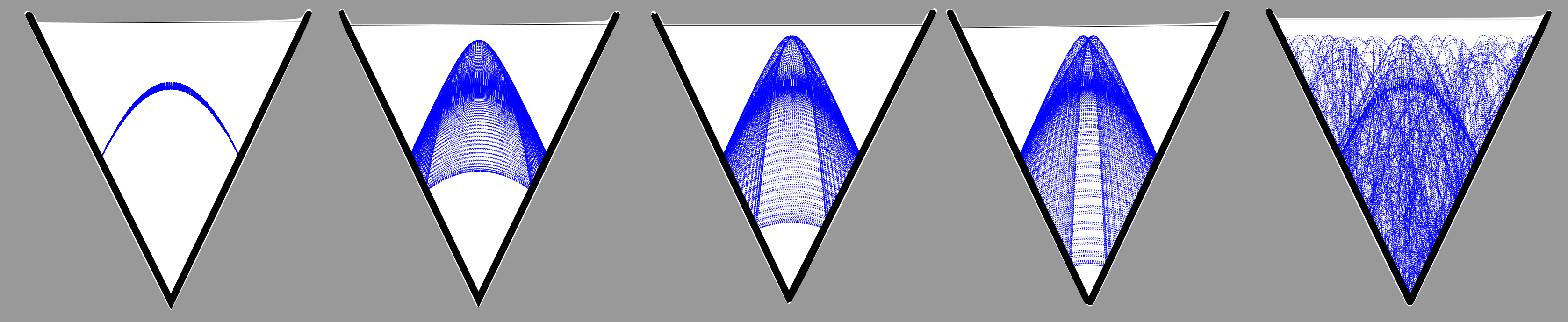}

    \caption{\small 
For a wedge with angle $2\pi/7$ orbits are perturbed slightly from their respective ideal $2$-periodic orbits, for launch angles $1.122, 1.123, 1.124, 1.125$, and $1.126$ radians. While the perturbation is identical the level of instability introduced for launch angle $1.126$ is more significant than for launch angle $1.122$, which remains stable.
     }
    \label{fig:2perunstable}
\end{figure}

We end this section with a closer look at the limits of stability and the factors leading to instability.
For no-slip wedges with no force, without the complication of curvature, all $2$-periodic points are known to be Lyapunov stable \cite{CFZ}. Experiments suggest that this property will sometimes, but by no means universally, carry over when a force is introduced for no-slip wedge billiards. Clearly, if the wedge opens downward a sufficient perturbation (or alternatively, a sufficient force) could result in a trajectory escaping the wedge entirely without further collisions, hence it is clear that the addition of a force may destroy stability in those cases.  Experiments suggest, however, that the creation of instability with the addition of force is more widespread, including for trajectories where the wedge opens upward.  If we fix a wedge angle and collision points, then consider an orbit slightly perturbed from the $2$-periodic requirement in Theorem \ref{thm:periodic}, varying the launch angle $\theta$ (while maintaining a constant relative perturbation) at  certain angles the invariance rapidly dissipates, as seen in Figure \ref{fig:2perunstable}. Accordingly, we make the following conjecture. 

\begin{conjecture}
The stability of path-reversing $2$-periodic orbits in 
no-slip billiards with force
depends on the launch angle.
\label{conj:launchstability}
\end{conjecture}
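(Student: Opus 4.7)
The plan is to establish the conjecture by linearizing the second-return map around a path-reversing $2$-periodic orbit supplied by Theorem \ref{thm:periodic} and demonstrating that its spectrum genuinely leaves the unit circle as the launch angle $\theta$ is varied with the wedge angle $\phi$, the collision points, and $\gamma$ all held fixed (with the rotational velocity $\dot x_0$ and speed $v$ readjusted along the way to keep the orbit $2$-periodic via equations \eqref{eq:2per} and \eqref{eq:vel}). First, I would set up local coordinates in the reduced phase space $S^1\times S^2_+$ near the periodic point; after imposing the energy integral (with gravitational potential included) one is left with a two-dimensional Poincar\'e section, parametrized, for instance, by an arclength displacement along the boundary and a tangential velocity perturbation, with rotational velocity determined.

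Next, I would write the second return map as $F=\Phi \circ T \circ \Phi \circ T$, where $T$ is the no-slip collision matrix in equation \eqref{eq:collision_map_beta} and $\Phi$ is the parabolic free-flight map between the two accessible boundary points under the force $\mathbf g$. The linearization $DT$ is explicit and independent of $\theta$, so the entire $\theta$-dependence is carried by $D\Phi$. By the path-reversing symmetry, the two free flights in $F$ are conjugate, so one can reduce the analysis to computing $D\Phi$ for one leg. This is done by writing the hitting time $\tau$ and exit arclength as implicit functions of the initial perturbation via the constraint that the parabola meet the opposite boundary segment, and differentiating using the implicit function theorem. Conservation of energy and the symplectic (or reversibility) structure then force the eigenvalues of $DF$ to come in reciprocal pairs, so linear stability of the reduced map is governed by whether $|\operatorname{tr} DF(\theta)|<2$.

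The key analytic step is to track how $\operatorname{tr} DF(\theta)$ moves as $\theta$ varies. In the limit $\theta\to 0^+$ the parabolic segment approaches a horizontal chord traversed at high speed, and after a rescaling the dynamics converges to the force-free wedge map, for which every $2$-periodic orbit is Lyapunov stable \cite{CFZ}; hence $|\operatorname{tr} DF(\theta)|<2$ for small $\theta$, consistent with the left panels of Figure \ref{fig:2perunstable}. At the opposite end, as $\theta$ grows the parabola deepens and the free-flight Jacobian picks up large time-of-flight contributions which inflate the off-diagonal entries of $D\Phi$; a quantitative estimate should give $|\operatorname{tr} DF(\theta)|>2$ for $\theta$ sufficiently close to the upper admissible bound, matching the loss of invariance observed around $\theta\approx 1.126$ in Figure \ref{fig:2perunstable}. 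By continuity of $\operatorname{tr} DF$ in $\theta$, there is then at least one critical launch angle $\theta^\ast$ at which the eigenvalues collide on the unit circle and leave it, which is exactly the content of the conjecture at the linear level.

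The main obstacle will be carrying out the free-flight differentiation cleanly enough that the resulting expression for $\operatorname{tr} DF(\theta)$ is tractable. Gravity breaks several symmetries present in the force-free case, and the endpoints of each parabolic arc depend simultaneously on the perturbed initial data, so the implicit function calculation has to be organized with some care to avoid a combinatorial explosion of terms. A secondary, more conceptual difficulty is that the conjecture as stated refers to stability in the Lyapunov sense, whereas the strategy above produces linear (spectral) information; upgrading elliptic spectrum to actual Lyapunov stability would require either a twist/KAM argument on the two-dimensional reduced map or a direct invariant region construction in the spirit of the proof of stability for force-free wedges in \cite{CFZ}, while confirming Lyapunov instability on the hyperbolic side is comparatively straightforward from the unstable manifold of $DF$.
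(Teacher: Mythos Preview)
The paper does not prove this statement: it is explicitly labeled a \emph{Conjecture} and is supported only by numerical experiments (Figures~\ref{fig:2perunstable} and~\ref{fig:stabexp}), with the surrounding text stating that ``the results in this section are in the form of conjectures based on numerical experiments, rendering more precise distinctions premature at this point.'' There is therefore no proof in the paper against which to compare your proposal; you are outlining a program for an open problem.

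As a program, your outline is reasonable but remains a sketch, and you correctly flag the two principal gaps yourself (the free-flight Jacobian computation and the linear-versus-Lyapunov distinction). A few additional cautions. First, your decomposition $F=\Phi\circ T\circ\Phi\circ T$ suppresses the frame changes: the two collisions occur on different walls of the wedge, so the collision maps are $R_{\phi}TR_{-\phi}$ and $R_{-\phi}TR_{\phi}$ as in the proof of Theorem~\ref{thm:periodic}, not the same $T$ twice. Second, your heuristic for the large-$\theta$ regime (``the free-flight Jacobian picks up large time-of-flight contributions\ldots'') is the crux of the argument and is not substantiated; note in particular that as $\theta\to\pi/2$ the speed $v$ in \eqref{eq:vel} also diverges, so both $\tau$ and $v$ blow up and the net effect on $D\Phi$ requires an actual computation, not an appeal to one term growing. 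Third, the paper's own numerical evidence (Figure~\ref{fig:stabexp}, lower right) suggests the stability picture is \emph{not} monotone in $\theta$: there appears to be a transition from stable to unstable \emph{and back to stable} as $\theta$ increases, so an argument built on ``stable for small $\theta$, unstable for large $\theta$'' may not capture the actual phenomenon even if it succeeds in exhibiting a single crossing.
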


As our experimental approach to the launch angle question is equally amenable to billiards more general than wedges, we will allow curvature and consider a billiard with two identical scatterers. 
Even when curvature is included, as in a no-slip Lorentz gas, the limits of linear stability are well understood in the absence of a force. 
Suppose we have two identical radius $R$ disks with curvature $\kappa=1/R$ and and opposite collision points distance $d$ apart, such that their tangential wedge has angle $2\phi$. 
Then the $2$-periodic point will be elliptic whenever
\begin{equation}
\kappa d < \frac{2-2\cos^2(\beta/2)\cos^2\phi}{\cos^2(\beta/2)\cos\phi}
    \label{eq:ellipticity}
\end{equation}
where $\beta$ is the alternate mass distribution constant \cite{CFZ}.

\begin{figure}[ht]
    
    \centering
    \includegraphics[width=.99\textwidth]{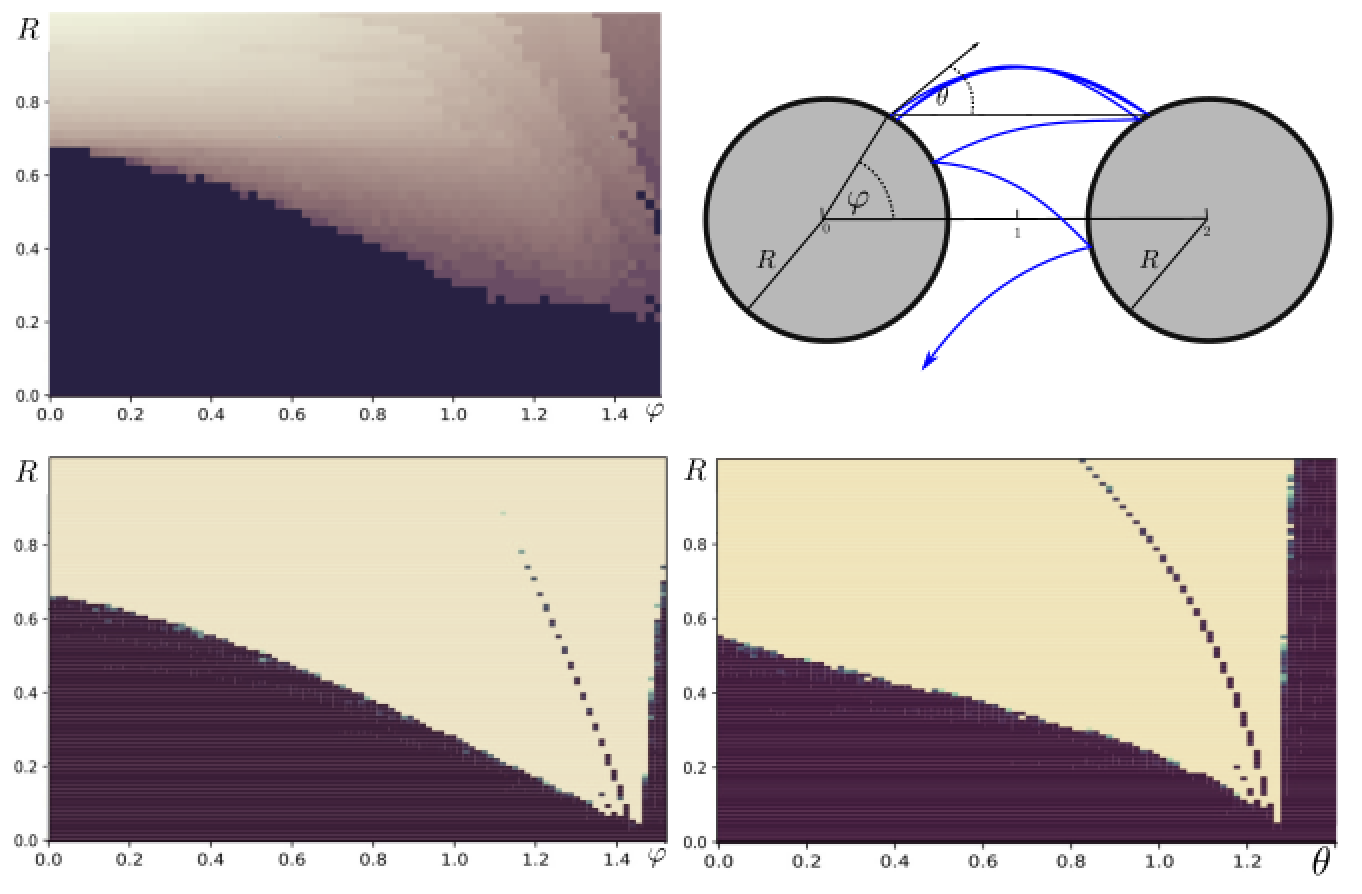}

    \caption{\small Dark regions indicating experimental stability of periodic points between identical scatterers. Upper left: With no force, stability of horizontal $2$-periodic  orbits depends only on curvature $\kappa=1/R$ and distance, parametrized by $\varphi.$ Upper right: With force, we redo the experiment for trajectories with positive launch angle $\theta$ slightly perturbed from the ideal $2$-periodic orbit. Lower left: The stability curve for the force case is similar to the no-force case. Lower right: If we fix $\varphi$, stability is determined by $\kappa=1/R$ and launch angle $\theta$.
     }
    \label{fig:stabexp}
\end{figure}

Figure \ref{fig:stabexp} gives the set up and results for the following numerical experiments.
Assume the distance between the centers of the disks is 2, and they have the same radius $0< R < 1$. We will vary the curvature by allowing $R$ to vary, and we will vary the distance by moving $\mathbf{q}_0$ around the first quadrant of the scatterer, parametrized by the angle $\varphi$ relative to the horizontal, with $\mathbf{q}_1$ situated at the corresponding point on the second scatterer. Sampling 100 values of $R$ and 100 values of $\varphi$, we consider an orbit slightly perturbed from the $2$-periodic orbit for each pair and record the number of collisions before it fails to hit one of the scatterers, or reaches a maximum of 1000 collisions. We then graph the results, with darker points indicating higher stability as shown by a large number of collisions.

As a base case,  the first experiment (Figure \ref{fig:stabexp}, upper left) considers horizontal $2$-periodic orbits in the case with no force. Equation \eqref{eq:ellipticity} predicts a threshold according to a cosine curve, and the experimental results are as expected, with the caveat that the samples near $\varphi = \pi/2$, where the orbits are nearly tangent, have a noticeable experimental error. The second experiment (Figure \ref{fig:stabexp}, lower left) is identical to the first, except that a force is added and the orbits are adjusted according to the formula in Theorem \ref{thm:periodic}. It appears that Equation \eqref{eq:ellipticity} continues to hold and the stability threshold is unaltered.

For the final experiment (Figure \ref{fig:stabexp}, lower right) we fix $\varphi=0.4$ and instead vary only the launch angle $\theta$.
The results suggest that in the no-slip case the launch angle may in fact determine stability, and in fact it appears that for a fixed curvature (or radius) there may be transition from stable to unstable and back to stable as the launch angle is increased from $0$ to $\pi/2$. In this case, based on checking individual orbits for launch angles around $1.3$, it appears that the emerging stable region for nearly vertical launch angles is a real effect and not merely an experimental artifact.

\section{Proofs of Theorems} 
\label{sec:bounce}
 
We now prove Theorem \ref{thm:bounce} characterizing the no-slip bouncing ball. 
\begin{proof}[Proof of Theorem \ref{thm:bounce}]
First, we demonstrate that the velocity is $2$-periodic.
Again letting $e_1,e_2,e_3$ correspond to the rotational, tangent, and outward normal directions, let  $R_\theta$ be the matrix which rotates $e_2, e_3$ by $\pi$ and fixes $e_1$, then by Equation \eqref{eq:collision_map_beta} the change in velocity between bounces is
\begin{equation*}\label{eq:collision_map_beta_bounce}\begingroup
\renewcommand*{\arraystretch}{1.5}
S=R_{\pi}TR_{\pi}= \left(\begin{array}{rcr}-\cos\beta & \sin\beta & 0 \\\sin\beta &\ \, \cos\beta & 0 \\0 & 0 & -1\end{array}\right).
\endgroup \end{equation*}  
A short calculation shows $S^2 = I$,
and the velocity orbit is $2$-periodic for any initial velocity. 

Next we consider the question of determining under what conditions the particle returns to the same position.
One might directly solve for the displacement after two collisions; alternatively, we note that by symmetry the second bounce will reverse the first if and only if 
$$ p_+ = Sp_-=-p_-.$$
This yields the system
\begin{align*}
    -\dot{x}_0\cos\beta  + \dot{y}_0\sin\beta &= -\dot{x}_0\\
    \dot{x}_0\sin\beta + \dot{y}_0\cos\beta  &= -\dot{y}_0,
\end{align*}
with the solution 
\begin{equation*}
\label{eq:2perbouncebeta}
\Delta=\frac{\dot{x}_0}{\dot{y}_0}=\frac{1+\cos \beta}{\sin \beta}.
\end{equation*}

\begin{figure}[htbp]
    
    \centering
    \includegraphics[width=.99\textwidth]{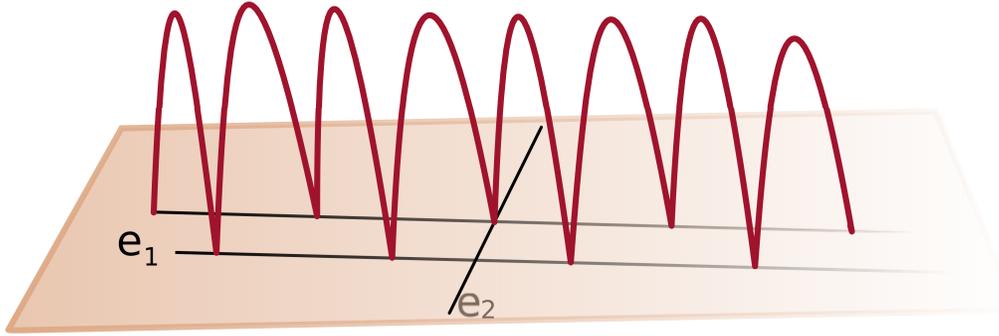}

    \caption{\small 
A simulation of three dimensional no-slip bouncing of a small radius sphere on a plane. For any initial conditions not resulting in $2$-periodic motion, there will be a tangential direction $e_1$ along which there is drift and an orthogonal direction $e_2$ in which the reduced system is $2$-periodic.
     }
    \label{fig:3dbounce}
\end{figure}

Recalling that  $\beta=2\arctan\gamma$, we may rewrite the condition for periodicity in terms of the mass distribution constant as
simply $\Delta = \gamma$,
the condition required in Theorem \ref{thm:bounce}, and the result follows.
\end{proof}

Figure \ref{fig:3dbounce} shows a general example of a three dimensional sphere bouncing in the plane with no-slip collisions, and indeed the theorem generalizes to a billiard with a single hyperplane boundary and orthogonal force in any dimension. 
Extending the transformation matrix 
in Equation \eqref{eq:collision_map_beta_bounce}
to a spherical particle in a three dimensional system, we may choose coordinates so that the change in velocity is give by 
\begin{equation*}\label{collision_3dblock}\begingroup
\renewcommand*{\arraystretch}{1.5}
T= \left(\begin{array}{ccc} R(\beta_1) & 0 & 0 \\ 0 &\ \, R(\beta_2) & 0 \\0 & 0 & J\end{array}\right),
\endgroup \end{equation*} 
where $R(\beta_i) \in O(2)$  and
\[\renewcommand*{\arraystretch}{1.5}
J=\left(\begin{array}{rc}
-1 & 0 \\
0 & 1
\end{array}\right).
\]
Notice that for a general table these decompositions are of limited value for understanding long-term behavior because the form will be destroyed by the necessary change of frame between collisions. However, as discussed in \cite{CCCF}, for a cylinder in dimension three or greater separation of the axial direction persists and therefore the larger system separates into motion along the cylindrical axis, mirroring a two dimensional no-slip billiard, and the remaining motion which is restricted to the cross-section. Here we are interested in the special case were the only boundary is a hyperplane,  hence each cross-sectional system is in turn a cylinder, and by repeated decomposition each tangential direction mirrors a two dimensional system.
The periodicity of the velocity orbit observed in the proof of Theorem \ref{thm:bounce} then extends to bouncing in any dimension on the corresponding hyperplane, and the subsequent argument may be applied directly to $\Delta_1, \Delta_2, \dots, \Delta_k$, the tangential-rotational ratios in all the tangential directions.
Then the motion will be periodic if $\Delta_i=\gamma$ for all $i$, while there will be drift in any tangential direction where there is not equality, and result follows in any dimension. With the appropriate decomposition, in fact, in the nonperiodic case one might choose coordinates to give a single tangential direction in which the drift occurs, while the system is $2$-periodic in all the remaining directions, generalizing the process illustrated in Figure \ref{fig:3dbounce}.

We now prove Theorem \ref{thm:periodic},
giving conditions for a $2$-periodic orbit between opposite boundary points $\mathbf{q}_0$ and $\mathbf{q}_1$ for a trajectory with launch angle $\theta$ and tangential wedge angle $\phi$.
\begin{proof}[Proof of Theorem \ref{thm:periodic}]
We begin by
observing that we must have speed
$$v=2\sqrt{\frac{gd}{\sin{2\theta}}}$$ as in Equation \eqref{eq:vel}  to ensure that a particle leaving $\mathbf{q}_0$ at an angle $\theta$ 
relative to the horizontal will next collide at the corresponding point $\mathbf{q}_1$ on the tangential wedge, with incoming velocity $\mathbf{v}_1^-=(\dot{x}_0, v\cos\theta, v\sin\theta)$, for some initial rotational velocity $\dot{x}_0$.
By symmetry, if $\dot{x}_0$ may be chosen so that
$$R_{\phi}TR_{-\phi} \mathbf{v}_{1}^-=\mathbf{v}_1^+=(-\dot{x}_0, -v\cos\theta, v\sin\theta),$$ then the orbit will be $2$-periodic. By Equation \eqref{eq:collision_map_beta}, using the mass distribution constant $\beta$, this is equivalent to the system
\begin{align*}
\dot{x}_0 & = \dot{x}_0\cos\beta +v\cos\phi\sin\theta\sin\beta-v\sin\phi\cos\theta\sin\beta \nonumber \\
 v\cos\phi & =\dot{x}_0\sin\theta\sin\beta-v\cos\phi\sin^2\theta\cos\beta+v\cos^2\phi-v\sin\phi\sin\theta\cos\theta(1+\cos\beta) \nonumber\\
 v\sin\theta & =-\dot{x}_0\cos\theta\sin\beta+v\cos\phi\sin\theta\cos\theta(1+\cos\beta)+v\sin\phi(\cos^2\theta + \sin\theta)\cos\beta.
\end{align*}

Solving the first equation for $\dot{x}_0$ gives the requirement
\begin{equation*}
    \dot{x}_0=\frac{v\cos\phi\sin\theta\sin\beta-v\sin\phi\cos\theta\sin\beta}{1-\cos\beta}
\end{equation*}
whence
\begin{equation*}
    \frac{v}{\dot{x}_0}(\cos\phi\sin\theta-\sin\phi\cos\theta)=\frac{1-\cos\beta}{\sin\beta}=\tan(\beta/2)=\gamma,
\end{equation*}
and Equation \eqref{eq:2per} is the necessary condition. It can be verified that this choice of $\mathbf{q}_0$ also satisfies the two remaining equations describing linear velocity. Finally, the same argument holds if the wedge is opening downward, except that to get a real solution one must change the sign of the force, and Corollary \ref{cor:opendown} follows.
\end{proof}

\section{Source code} 
Numerical experiments were conducted using code written in Python and SageMath, which is available upon request. 

\section{Acknowledgements}

The authors would like to thank the Tarleton University Office of Research and Innovation, who provided support through a First Year Student Experience grant and Faculty Student Research Grants, the American Mathematical Society and National Science Foundation, who provided support through an NREUP grant, the University of Delaware, who provided support through the Summer Scholar program, and  Mount Holyoke College, who provided travel support.


\begin{thebibliography}{10}

\bibitem{ACW}
J.~Ahmed, C.~Cox, and B.~Wang.
\newblock No-slip billiards with particles of variable mass distribution.
\newblock {\em Chaos: An Interdisciplinary Journal of Nonlinear Science},
  32(2):023102, 2022.

\bibitem{AB}
H.~Attarchi and L.~A. Bunimovich.
\newblock Collision of a hard ball with singular points of the boundary.
\newblock {\em Chaos: An Interdisciplinary Journal of Nonlinear Science},
  31(1):013123, 2021.

\bibitem{BKM}
A.~V. Borisov, A.~A. Kilin, and I.~S. Mamaev.
\newblock On the model of non-holonomic billiard.
\newblock {\em Regul. Chaotic Dyn.}, 16(6):653--662, 2011.

\bibitem{BGKT}
M.~Boshernitzan, G.~Galperin, T.~Kr{\"u}ger, and S.~Troubetzkoy.
\newblock Periodic billiard orbits are dense in rational polygons.
\newblock {\em Transactions of the American Mathematical Society},
  350:3523--3535, 1998.

\bibitem{gutkin}
D.~S. Broomhead and E.~Gutkin.
\newblock The dynamics of billiards with no-slip collisions.
\newblock {\em Phys. D}, 67(1-3):188--197, 1993.

\bibitem{B}
L.~A. Bunimovich.
\newblock Physical versus mathematical billiards: From regular dynamics to
  chaos and back.
\newblock {\em Chaos: An Interdisciplinary Journal of Nonlinear Science},
  29(9):091105, 2019.

\bibitem{Chernov2}
N.~Chernov.
\newblock Sinai billiards under small external forces.
\newblock {\em Annales Henri Poincare}, 2:197--236, 06 2001.

\bibitem{CD}
N.~Chernov and D.~Dolgopyat.
\newblock The {G}alton board: limit theorems and recurrence.
\newblock {\em J. Amer. Math. Soc.}, 22(3):821--858, 2009.

\bibitem{CCCF}
T.~{Chumley}, S.~{Cook}, C.~{Cox}, and R.~{Feres}.
\newblock {Rolling and no-slip bouncing in cylinders}.
\newblock {\em J. Geom. Mech.}, 12(1):53--84, 2020.

\bibitem{CZ15}
M.~F. Correia and H.-K. Zhang.
\newblock Stability and ergodicity of moon billiards.
\newblock {\em Chaos: An Interdisciplinary Journal of Nonlinear Science},
  25(8):083110, 2015.

\bibitem{CF}
C.~Cox and R.~Feres.
\newblock Differential geometry of rigid bodies collisions and non-standard
  billiards.
\newblock {\em Discrete Contin. Dyn. Syst.}, 36(11):6065--6099, 2016.

\bibitem{CFZ}
C.~Cox, R.~Feres, and H.-K. Zhang.
\newblock Stability of periodic orbits in no-slip billiards.
\newblock {\em Nonlinearity}, 31(10):4443--4471, 2018.

\bibitem{CFBZ}
C.~Cox, R.~Feres, and B.~Zhao.
\newblock Rolling systems and their billiard limits.
\newblock {\em Regular and Chaotic Dynamics}, 26(1):1--21, 2021.

\bibitem{cross}
R.~Cross.
\newblock Grip-slip behavior of a bouncing ball.
\newblock {\em American Journal of Physics}, 70(11):1093--1102, 2002.

\bibitem{dettmann}
C.~P. Dettmann.
\newblock Diffusion in the {L}orentz gas.
\newblock {\em Commun. Theor. Phys. (Beijing)}, 62(4):521--540, 2014.

\bibitem{DM}
C.~P. Dettmann and G.~P. Morriss.
\newblock Crisis in the periodic lorentz gas.
\newblock {\em Phys. Rev. E}, 54:4782--4790, Nov 1996.

\bibitem{galton}
F.~Galton.
\newblock Natural inheritance, 1889.

\bibitem{Garwin}
R.~L. Garwin.
\newblock Kinematics of an ultraelastic rough ball.
\newblock {\em American Journal of Physics}, 37(1):88--92, 1969.

\bibitem{gualtieri}
M.~Gualtieri, T.~Tokieda, L.~Advis-Gaete, B.~Carry, E.~Reffet, and C.~Guthmann.
\newblock Golfer’s dilemma.
\newblock {\em American Journal of Physics}, 74(6):497--501, 2006.

\bibitem{hefner}
B.~T. Hefner.
\newblock The kinematics of a superball bouncing between two vertical surfaces.
\newblock {\em American Journal of Physics}, 72(7):875--883, 2004.

\bibitem{KP}
S.~O. Kamphorst and S.~Pinto-de Carvalho.
\newblock The first {B}irkhoff coefficient and the stability of 2-periodic
  orbits on billiards.
\newblock {\em Experiment. Math.}, 14(3):299--306, 2005.

\bibitem{kozlov}
V.~V. Kozlov and M.~Y. Mitrofanova.
\newblock Galton board.
\newblock {\em Regul. Chaotic Dyn.}, 8(4):431--439, 2003.

\bibitem{KR}
P.~L. Krapivsky and S.~Redner.
\newblock Slowly divergent drift in the field-driven lorentz gas.
\newblock {\em Phys. Rev. E}, 56:3822--3830, Oct 1997.

\bibitem{Lorentz}
H.~Lorentz.
\newblock The motion of electrons in metallic bodies i.
\newblock In {\em KNAW, proceedings}, volume~7, pages 438--453, 1905.

\bibitem{Moran}
B.~Moran, W.~G. Hoover, and S.~Bestiale.
\newblock Diffusion in a periodic {L}orentz gas.
\newblock {\em J. Statist. Phys.}, 48(3-4):709--726, 1987.

\bibitem{sinai}
J.~G. Sina\u{\i}.
\newblock Dynamical systems with elastic reflections. {E}rgodic properties of
  dispersing billiards.
\newblock {\em Uspehi Mat. Nauk}, 25(2 (152)):141--192, 1970.

\bibitem{W}
M.~P. Wojtkowski.
\newblock The system of two spinning disks in the torus.
\newblock {\em Phys. D}, 71(4):430--439, 1994.

\end{thebibliography}
\end{document}